\DeclarePairedDelimiter{\ceil}{\lceil}{\rceil}
\DeclarePairedDelimiter{\floor}{\lfloor}{\rfloor}
\newcommand{\R}{\mathbb{R}}
\newcommand{\N}{\mathbb{N}}
\newcommand{\cA}{\mathcal{A}}
\newcommand{\cB}{\mathcal{B}}
\newcommand{\cF}{\mathcal{F}}
\newcommand{\cL}{\mathcal{L}}
\newcommand{\Op}{\operatorname{O}}
\DeclareMathOperator{\E}{\mathbb{E}} 
\DeclareMathOperator{\Var}{\mathsf{Var}} 
\DeclareMathOperator{\Cov}{\mathsf{Cov}}
\DeclareMathOperator{\vect}{\mathsf{Vec}}
\DeclareMathOperator{\trace}{Tr}
\newcommand{\cLnull}{{\cL_2^0}}
\newcommand{\dd}{\, \mathrm{d}}
\newcommand{\dom}{\mathrm{dom}}
\newcommand{\inpro}[3][{}]{ \left\langle #2 , #3 \right\rangle_{#1} }
\newcommand{\norm}[2][{}]{\| #2 \|_{#1}}
\newtheorem{lemma}{Lemma}[section]
\newtheorem{proposition}[lemma]{Proposition}
\newtheorem{theorem}[lemma]{Theorem}
\theoremstyle{remark}
\newtheorem{remark}[lemma]{Remark}
\theoremstyle{definition}
\newtheorem{assumption}[lemma]{Assumption}
\newtheorem{example}[lemma]{Example}
\begin{document}
\title[Rapid covariance based sampling of linear SPDE in MLMC]{
Rapid covariance-based sampling of linear SPDE approximations in the multilevel Monte Carlo method  
}

\author[A.~Petersson]{Andreas Petersson} \address[Andreas Petersson]{\newline Department of Mathematical Sciences
\newline Chalmers University of Technology \& University of Gothenburg
\newline S--412 96 G\"oteborg, Sweden.} 
\email[]{andreas.petersson@chalmers.se}

\thanks{
Acknowledgement.
	The author wishes to express many thanks to Annika Lang, Stig Larsson for support and fruitful discussions, to Michael B. Giles for helpful comments and to three anonymous referees who helped to improve the results and the presentation. The work was supported in part by the Swedish Research Council under Reg.~No.~621-2014-3995 and the Knut and Alice Wallenberg foundation.
}

\date{July 18, 2019}
\subjclass{60H15, 60H35, 65C05, 65C30, 65M60}
\keywords{Stochastic partial differential equations, finite element method, Monte Carlo, multilevel Monte Carlo, covariance operators.}

\begin{abstract}
The efficient simulation of the mean value of a non-linear functional of the solution to a linear stochastic partial differential equation (SPDE) with additive Gaussian noise is considered. A Galerkin finite element method is employed along with an implicit Euler scheme to arrive at a fully discrete approximation of the mild solution to the equation. A scheme is presented to compute the covariance of this approximation, which allows for rapid sampling in a Monte Carlo method. This is then extended to a multilevel Monte Carlo method, for which a scheme to compute the cross-covariance between the approximations at different levels is presented. In contrast to traditional path-based methods it is not assumed that the Galerkin subspaces at these levels are nested. The computational complexities of the presented schemes are compared to traditional methods and simulations confirm that, under suitable assumptions, the costs of the new schemes are significantly lower.
\end{abstract}

\maketitle

\section{Introduction}

Stochastic partial differential equations (SPDE) have many applications in engineering, finance, biology and meteorology. These include filtering problems, pricing of energy derivative contracts and modeling of sea surface temperature. For an overview of applications, we refer to \cite{DPZ14,LR17}. A natural quantity of interest for an SPDE is the expected value of a non-linear functional of the solution of the equation at a fixed time. This includes moments of the solution but also more concrete quantities, such as, in the case that the SPDE models sea surface temperature, the average amount of area in which the temperature exceeds a given temperature distribution. In order to determine such quantities, numerical approximations of the SPDE have to be considered, since analytical solutions are in general unavailable. 

The field of numerical analysis of SPDE is very active and a multitude of approximations have been considered in the literature, see e.g., \cite{JK09} and \cite[Section 10.9]{LPS14} for an overview. In this paper we take the approach of \cite{K14}, where the author considers an SPDE of evolutionary type and employs a Galerkin method for the spatial discretization of the equation (which includes both spectral and finite element methods) along with a drift-implicit Euler--Maruyama scheme for the temporal discretization. The finite element method in particular is useful and flexible as no explicit knowledge of eigenfunctions or eigenvalues is needed. The author of \cite{K14} does, however, omit the problem of how to, given this approximation, efficiently estimate expected values. In this paper we formulate methods for this problem that, under suitable assumptions, outperform standard methods based on the discretization considered in~\cite{K14}.

Typically, the approximation of expected values is accomplished by a \textit{Monte Carlo} method (MC), i.e., by computing a large number of sample paths of the approximate solution and taking the average of the functional of interest applied to each path. This is however quite expensive. Starting with the publication of~\cite{G08}, the \textit{multilevel Monte Carlo method} (MLMC) has become popular, since it can reduce computational cost while retaining accuracy. The method was first considered in~\cite{H98} for the evaluation of functionals arising from the solution of integral equations. We refer to \cite{G15} for an introduction to this active field and to \cite{BL12a,BLS13} for the first applications of MLMC to finite element approximations of SPDE. MLMC was first considered for SPDE in the thesis~\cite{GR08}, where a spectral Galerkin discretization was used.

Even though the MLMC method decreases the computational cost of the approximation of expected values, it is still fairly expensive. In this paper we formulate covariance-based variants of the MC and MLMC methods. The idea is to exploit the fact that as long as the considered SPDE has additive noise and is linear, then the approximation from \cite{K14} of the end-time solution is Gaussian. Since a Gaussian random variable is completely determined by its mean and covariance, calculating these parameters provides an efficient way of sampling the approximation (Algorithm~\ref{alg:cov_mc} below). To incorporate this idea in an MLMC method (Algorithm~\ref{alg:cov_mlmc}) we calculate the cross-covariance between two SPDE approximations in different Galerkin subspaces. In contrast to \cite{BL12a} and \cite{BLS13}, the subspace sequence is not assumed to be nested. We demonstrate, using theoretical computations and numerical simulations, that the computational costs of these new algorithms are, under mild assumptions, substantially lower than their traditional path-based alternatives (Algorithms~\ref{alg:path_mc} and~\ref{alg:path_mlmc} respectively).

The paper is organized as follows: In Section~\ref{sec:setting} we recapitulate the theoretical setting and approximation results of \cite{K14}. We also introduce the assumptions we make along with a stochastic advection-diffusion equation as a concrete example that fulfills these. In Section~\ref{sec:mc} we introduce a covariance-based method for computing samples of SPDE approximations in an MC setting and compare the complexity of it to the traditional path-based method. We extend this in Section~\ref{sec:mlmc} to the setting of the MLMC method. Section~\ref{sec:implementation} contains a description of the numerical implementation of our methods and a discussion of our assumptions. Finally in Section~\ref{sec:numerics} we demonstrate the efficiency of our approach by simulation of the stochastic heat equation. 

\section{Stochastic Partial Differential Equations and their Approximations}
\label{sec:setting}

Let $(H, \inpro{\cdot}{\cdot}, \norm{\cdot})$ be a real separable Hilbert space and let $-A \colon \dom(-A) \subset H \to H$ be a positive definite, self-adjoint operator with a compact inverse on $H$. For a fixed time $T<\infty$, let $(\Omega, \cA, (\cF_t)_{t \in [0,T]}, P)$ be a complete filtered probability space satisfying the usual conditions. In this context we consider the linear SPDE
\begin{equation}
\label{eq:spde}
\begin{split}
\dd X(t) &= \big(AX(t) + F(t,X(t)) \big) \dd t + G(t) 
\dd W(t), \\
X(0) &= x_0,
\end{split}
\end{equation}
for $t \in [0,T]$. Here $W$ is an $H$-valued cylindrical $Q$-Wiener process, $x_0$ is a random member of a subspace of $H$, while $F$ and $G$ are mappings that fulfill Assumption~\ref{assumptions:1} below. The solution $X = (X(t))_{t\in [0,T]}$ is then an $H$-valued stochastic process. 
Equation \eqref{eq:spde} is treated with the semigroup approach of \cite[Chapter 7]{DPZ14}, resulting in a so-called mild solution of the equation. In order to introduce this notion, we start by describing the spectral structure induced by $A$ on $H$. 

By the spectral theorem applied to $(-A)^{-1}$, there is an orthonormal eigenbasis $( e_i )_{i \in \N}$ of $H$ and a positive sequence $( \lambda_i )_{i \in \N}$ of eigenvalues of $-A$ that is increasing and for which $\lim_i \lambda_i = \infty$. For $r \in \R$, fractional powers of $-A$ are defined by $(-A)^\frac{r}{2} f = \sum^\infty_{i = 1} \lambda_i^\frac{r}{2} \inpro{f}{e_i} e_i$
for $f \in \dot{H}^r = \dom((-A)^\frac{r}{2})$, which is characterized by \begin{equation*}
	\dot{H}^r = \left\{ f = \sum_{i=1}^{\infty} f_i e_i : f_i \in \R \text{ for all } i \in \N \text{ and } \norm[r]{f}^2 = \sum_{i=1}^{\infty} \lambda_i^{r} f_i^2 < \infty \right\}.
\end{equation*}
This is a separable Hilbert space with the inner product
$\inpro[r]{\cdot}{\cdot} = \inpro{(-A)^\frac{r}{2} \cdot }{(-A)^\frac{r}{2} \cdot }$. For $r> 0$, we have the Gelfand triple $\dot{H}^{r} \subseteq H \subseteq \dot{H}^{-r}$ since $\dot{H}^{-r} \cong (\dot{H}^r)^*$, the dual of $\dot{H}^r$. The operator $A$ is the generator of an analytic semigroup $E = (E(t))_{t \ge 0} \subset \cL(H)$.

Next, we briefly recapitulate some notions from functional analysis and probability theory. For two real separable Hilbert spaces $H_1$ and $H_2$, we denote by $H_1 \oplus H_2 = \{ [f,   u]' : f \in H_1, u \in H_2\}$ the Hilbert (external) direct sum of $H_1$ and $H_2$, with an inner product defined by $\inpro[H_1 \oplus H_2]{[f_1, u_1]'}{[f_2, u_2]'} = \inpro[H_1]{f_1}{f_2} + \inpro[H_2]{u_1}{u_2}$, $f_1, f_2 \in H_1$, $u_1, u_2 \in H_2$. Similarly, by $H_1 \otimes H_2$ we denote the Hilbert tensor product, i.e., the completion of the algebraic tensor product of $H_1$ and $H_2$ under the norm induced by the inner product $\inpro[H_1 \otimes H_2]{f_1 \otimes u_1}{f_2 \otimes u_2} = \inpro[H_1]{f_1}{f_2}\inpro[H_2]{u_1}{u_2}$, $f_1, f_2 \in H_1$, $u_1, u_2 \in H_2$. For $H_1 = H_2$ we write $H_1^{\otimes 2} = H_1 \otimes H_1$ and $f^{\otimes 2} = f \otimes f$ for $f \in H_1$. We denote by $\cL(H_1,H_2)$ and $\cL_2(H_1,H_2)$, or $\cL(H)$ respectively $\cL_2(H)$ when $H_1 = H_2 = H$, the spaces of linear respectively Hilbert--Schmidt operators from $H_1$ to $H_2$.  Similarly, the family of \textit{trace class} operators on $H$ is denoted by $\cL_1^{\mathrm{s}}(H)$ and consists of those operators $K \in \cL(H)$ that are positive semidefinite and symmetric and for which the series $\trace{K} = \sum_{i=1}^\infty \inpro{K e_i}{e_i}$ is absolutely convergent.

For an $H$-valued random variable $X \in L^1(\Omega;H)$, i.e., $\E[\norm{X}] < \infty$, the \textit{expected value}, or \textit{mean}, of $X$ is defined by the Bochner integral $
\E[X] = \int_{\Omega} X(\omega) \dd P(\omega)$. If $X \in L^2(\Omega;H)$, we define the \textit{covariance} or \textit{covariance operator} of $X$ by 
\begin{equation*}
\Cov(X) = \E[(X - \E[X])^{\otimes 2}] = \E[X^{\otimes 2}] - \E[X]^{\otimes 2} \in H^{\otimes 2}.
\end{equation*} More generally, for Hilbert spaces $H_1$ and $H_2$, we define the \textit{cross-covariance} or \textit{cross-covariance operator} of $Y \in L^2(\Omega;H_2)$ and $Z \in L^2(\Omega;H_1)$ by 
\begin{equation*}
\Cov(Y,Z) = \E[(Y - \E[Y]) \otimes (Z - \E[Z])] = \E[Y \otimes Z] - \E[Y] \otimes \E[Z] \in H_2 \otimes H_1
\end{equation*} so that $\Cov(X) = \Cov(X,X).$
Calling these quantities operators is justified by the fact that $H_2 \otimes H_1 \simeq \cL_2(H_1,H_2) \subseteq \cL(H_1,H_2)$. The action of the cross-covariance is given by $\Cov(Y,Z) = \E[\inpro[H_1]{Z - \E[Z]}{\cdot}(Y - \E[Y])]$ which means that $\Cov(X) \in \cL_1^{\mathrm{s}}(H)$, implying that it has a unique square root. 

We next recall that an $H$-valued random variable $X$ is said to be Gaussian if $X \in H$ $P$-a.s. and $\inpro{X}{f}$ is a real-valued Gaussian random variable for all $f \in H$. In this case $X \in L^p(\Omega;H)$  for all $p \ge 1$ so $\Cov(X)$ is well-defined. Now, a stochastic process $W: [0,T] \times \Omega \to H$ is said to be an \textit{$H$-valued $Q$-Wiener process} adapted to $(\cF_t)_{t \in [0,T]}$ if $W(0)=0$, $W$ has $P$-a.s. continuous trajectories, and if there exists a self-adjoint trace class operator $Q \in \cL(H)$ such that for each $0 \le s < t \le T$, $W(t)-W(s)$ is Gaussian with zero mean and covariance $(t-s)Q$ and $W(t)-W(s)$ is independent of $\cF_s$. Below we consider  \textit{cylindrical $Q$-Wiener processes} (see, e.g., \cite[Section 2.5]{PR07}, \cite[Chapters 2-4]{DPZ14}). These can be formally defined as $Q$-Wiener processes in $\dot{H}^{-r}$ for large enough $r>0$, allowing for $\trace(Q) = \infty$. In this case it is no longer true that $W(t)-W(s) \in H$ $P\text{-a.s.}$, but $\inpro{W(t)-W(s)}{f}$ is still a real-valued Gaussian random variable for all $f \in H$ and $\E[\inpro{W(t)-W(s)}{f}\inpro{W(t)-W(s)}{g}] = \inpro{(t-s)Qf}{g}$ for all $f,g \in H$

With this in mind, a predictable process $X = (X(t))_{t \in [0,T]}$, with $T< \infty$ fixed, is called a \textit{mild solution} to~\eqref{eq:spde} if $\sup_{t \in [0,T]} \norm[L^2(\Omega;H)]{X(t)} < \infty$ and for all $t \in [0,T]$,
\begin{equation*}
X(t) = E(t) x_0 + \int^t_0 E(t-s)F(s,X(s)) \dd s + \int^t_0 E(t-s) G(s) \dd W(s), \text{
	$P$-a.s. }
\end{equation*}
The first integral is of Bochner type while the second is an $H$-valued It\^o-integral. For this to be well defined, we need $G$ to map into $\cL^0_2 = \cL_2(Q^{1/2}(H),H)$. Here $Q^{1/2}(H)$ is a Hilbert space equipped with the inner product $\inpro{Q^{-1/2}\cdot}{Q^{-1/2}\cdot}$, where $Q^{-1/2}$ denotes the pesudoinverse of $Q^{1/2}$. We make this explicit in the assumption below, which by \cite[Theorem 2.25]{K14} guarantees the existence of a mild solution to \eqref{eq:spde}. The assumption also implies that the approximation we consider below is Gaussian.

\begin{assumption} The parameters of~\eqref{eq:spde} fulfill the following requirements.
	\label{assumptions:1}
	\hfill
	\begin{enumerate}[label=(\roman*)]
		\item 
		$W=(W(t))_{t\in[0,T]}$ is an $(\cF_t)_{t \in [0,T]}$-adapted cylindrical $Q$-Wiener process, where the operator ${Q \in \cL(H)}$ is self-adjoint and positive semidefinite, not necessarily of trace class.
		\item 
		There is a constant $C>0$ such that $G : [0,T] \to \cLnull $ satisfies
		\begin{equation*}
		\norm[\cLnull]{G(t_1)-G(t_2)} \le C |t_1-t_2|^{1/2}, \text{ for all } t_1, t_2 \in [0,T].
		\end{equation*}
		\item 
		The function $F : [0,T] \times H \to \dot{H}^{-1}$ is affine in $H$, i.e., for each $t \in [0,T]$ there exists an operator $F^1_t \in \cL(H,\dot{H}^{-1})$ and an element $F^2_t \in \dot{H}^{-1}$ such that $F(t, f) = F^1_t f + F^2_t$ for all $f \in H$. Furthermore, there exists a constant $C>0$ such that $F : [0,T] \times H \to \dot{H}^{-1}$ satisfies
		\begin{equation*}
		\norm[-1]{F(t_1, f)-F(t_2, f)} \le C (1+\norm{f}) |t_1-t_2|^{1/2}
		\end{equation*}
		for all $f \in H$, $t_1, t_2 \in [0,T]$, and  $\norm[\cL(H,\dot{H}^{-1})]{F_t^1} \le C$ for all $t \in [0,T]$.
		\item  The initial value $x_0$ is a (possibly degenerate) $\cF_0$-measurable $\dot{H}^1$-valued Gaussian random variable.
	\end{enumerate}
\end{assumption}

By degenerate, we mean that $\Cov(x_0)$ may only be positive semidefinite, allowing for a deterministic initial value. Since $x_0 \in L^p(\Omega;H)$ for all $p \ge 1$ we have $\sup_{t \in [0,T]} \norm[L^p(\Omega;\dot{H}^s)]{X(t)} < \infty$
for all $p \ge 1$ and $s \in [0,1)$.

As a model problem in this context, we consider a stochastic advection-diffusion equation.
\begin{example}
	\label{ex:stoch-conv}
	For a convex polygonal domain $D\subset \R^d$, $d=1,2,3$, let $H=L^2(D)$ and for a function $f$ on $D$, let the operator $-A \colon \dom(-A) \to H$ be given by $A f = \nabla \cdot(a \nabla f)$ with Dirichlet zero boundary conditions, where $a: D \to \R$ is a sufficiently smooth strictly positive function. In this setting it holds (cf.~\cite[Chapter 3]{T06})  that $\dot{H}^1 = H^1_0(D)$ and $\dot{H}^2 = H^2(D) \cap H^1_0(D)$, where $H^k(D)$ is the Sobolev space of order $k$ on $D$ and $H^1_0(D)$ consists of all $ f \in H^1(D)$ such that $f(x) = 0$ for $x \in \partial D$, the boundary of $D$. Let $F$ be given by $F(t,f) = b(t, \cdot) \cdot \nabla f(\cdot) + c(t,\cdot) f(\cdot) + d(t,\cdot)$ for a function $f$ on $D$. When $b: D \times [0,T] \to \R^d$ and $c,d: D \times [0,T] \to \R$ are smooth, $F(t,\cdot)$ is indeed a member of $\cL(H, \dot{H}^{-1})$ (cf. \cite[Example 2.22]{K14}). Choosing $G = g(t)\cdot$, where $g: [0,T]\to\R$ is smooth, $Q$ such that $\trace(Q)<\infty$ and $x_0$ smooth, Equation~\eqref{eq:spde} is interpreted as the problem to find a function-valued stochastic process $X$ such that 
	\begin{align*}
	\dd X(t,x) &= \left( \nabla \cdot(a(x) \nabla X(t,x)) + b(t,x) \cdot \nabla X(t,x) + c(t,x) X(t,x) + d(t,x) \right) \dd t \\
	&\quad+ g(t) \dd W(t,x)
	\end{align*}
	for all $t \in (0,T], x \in D$, with $X(t,x)=0$ for all $t \in (0,T], x \in \partial D$ and $X(0,x) = x_0(x)$ for all $x \in D$.	Moreover, the covariance of the noise has a more concrete meaning in this setting. A consequence of $\trace(Q) < \infty$ is, by \cite[Proposition A.7]{PZ07},  the existence of a symmetric square integrable function $q: D \times D \to \R$ such that 
	\begin{equation}
	\label{eq:Q_q}
	Q f = \int_D q(\cdot,y) f(y) \dd y
	\end{equation}
	for $f \in H$. Similarly, if $q$ is a symmetric, positive semidefinite continuous function on $D \times D$, then \eqref{eq:Q_q} defines a covariance operator by \cite[Theorem A.8]{PZ07}. If we take $W(t,\cdot)$, $t \in [0,T]$, to be a random field (which in this setting is to say that it is defined in all of $D$ and jointly $\cB(D) \otimes \cF_t$-measurable, where $\cB(D)$ denotes the Borel $\sigma$-algebra on $D$), then $t q$ is the covariance function of the field.
\end{example}

For the spatial discretization of \eqref{eq:spde}, we assume the setting of \cite[Chapter 3.2]{K14}. Let $(V_h)_{h \in (0,1]}$ be a family of subspaces of $\dot{H}^1$ equipped with $\inpro{\cdot}{\cdot}$ such that $N_h = \dim(V_h) < \infty$. By $P_h: H^{-1} \to V_h$ we denote the generalized orthogonal projector onto $V_h$, defined by $\inpro{P_h f}{\Phi_h} =
{}_{\dot{H}^{-1}}\langle
f, \Phi_h \rangle_{\dot{H}^{1}}$ for all $f \in  \dot{H}^{-1}$ and $\Phi_h \in V_h$, where $_{\dot{H}^{-1}}\langle
\cdot, \cdot \rangle_{\dot{H}^{1}}$ denotes the dual pairing. By $R_h$ we denote the the Ritz projector, i.e., the orthogonal projector $R_h: \dot{H}^1 \to V_h$ with respect to $\inpro[1]{\cdot}{\cdot}$.
We assume that there is a constant $C>0$ such that $\norm[1]{P_h f} \le C \norm[1]{f}$ and $\norm{R_h f - f} \le C \norm[s]{f} h^s $ for all $h \in (0,1]$ and all $f$ in $\dot{H}^1$ and $\dot{H}^s$, $s \in \{1,2\}$, respectively. This setting includes both finite element and spectral methods (see \cite[Example 3.6-3.7]{K14} for details on when these relatively mild assumptions hold).
The operator $-A_h: V_h \to V_h$ is now defined by $\inpro{-A_h f_h}{g_h} = \inpro[1]{f_h}{g_h} = \inpro{(-A)^\frac{1}{2}f_h}{(-A)^\frac{1}{2}g_h}, \text{ for all } f_h, g_h \in V_h$. For the time discretization, we use the drift-implicit Euler method. Let a uniform time grid be given by $t_j = j \Delta t$ for $j = 0, 1, \ldots, N_{\Delta t} = T / \Delta t \in \N$. A fully discrete approximation $(X_{h,\Delta t}^{t_{j}})_{j=0}^{N_{\Delta t}}$ is then given by
\begin{equation}
\label{eq:backward_euler}
X_{h,\Delta t}^{t_{j+1}} - X_{h,\Delta t}^{t_j} = \left(A_h X_{h,\Delta t}^{t_{j+1}} + P_h F(t_j,X_{h,\Delta t}^{t_j})\right) \Delta t + P_h G(t_j) \Delta W^{j},
\end{equation}
where $\Delta W^j = W(t_{j+1}) - W(t_{j})$ and $j = 0, \ldots, N_{\Delta t} - 1$. It converges strongly to the solution of \eqref{eq:spde} in the sense of the following theorem.

\begin{theorem}\cite[Theorem 3.14]{K14}
	\label{thm:strongconvergence}
	Let the terms of \eqref{eq:spde} satisfy Assumption~\ref{assumptions:1} and let $(X_{h,\Delta t}^{T})_{h,\Delta t}$ be a family of approximations of $X(T)$ given by \eqref{eq:backward_euler}. Then, for all $p \ge 1$, $\sup_{h,\Delta t} (\norm[L^p(\Omega;H)]{X_{h,\Delta t}^{T}}) < \infty$ and there is a constant $C > 0$ such that
	\begin{equation*}
	\norm[L^p(\Omega;H)]{X(T)-X_{h,\Delta t}^{T}} \le C\left( h + {\Delta t}^{1/2}\right), \text{ for all } h, \Delta t \in  (0,1].
	\end{equation*}
\end{theorem}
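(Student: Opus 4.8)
This result is \cite[Theorem 3.14]{K14}; I outline how one would prove it. The plan is to put the fully discrete scheme into variation-of-constants form and compare it, term by term, with the mild solution. Write $t_N = T$, set $E_{h,\Delta t} := (I - \Delta t\, A_h)^{-1} \in \cL(V_h)$, and, with the natural initial approximation of $x_0$ in $V_h$, iterate \eqref{eq:backward_euler} to get
\[
  X_{h,\Delta t}^{t_n} = E_{h,\Delta t}^{n} P_h x_0 + \Delta t \sum_{j=0}^{n-1} E_{h,\Delta t}^{\,n-j} P_h F(t_j, X_{h,\Delta t}^{t_j}) + \sum_{j=0}^{n-1} E_{h,\Delta t}^{\,n-j} P_h G(t_j)\, \Delta W^{j}.
\]
Subtracting this from the mild formula for $X(t_n)$ and using that $F$ is affine, $F(t,f) = F^1_t f + F^2_t$, the error $e_n := X(t_n) - X_{h,\Delta t}^{t_n}$ decomposes as $e_n = \mathrm{I}_n + \mathrm{II}_n + \mathrm{III}_n + \mathrm{IV}_n$: the initial-data error $\mathrm{I}_n = (E(t_n) - E_{h,\Delta t}^{n}P_h)x_0$; the drift and stochastic-convolution errors $\mathrm{II}_n, \mathrm{III}_n$ with the \emph{exact} solution $X$ inserted; and the feedback term $\mathrm{IV}_n = \Delta t\sum_{j<n}E_{h,\Delta t}^{\,n-j}P_h F^1_{t_j}(X(t_j) - X_{h,\Delta t}^{t_j})$, to be absorbed by a discrete Gronwall inequality at the end.

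The workhorse is a family of deterministic estimates for the error operators $\Gamma_n := E(t_n) - E_{h,\Delta t}^{n}P_h$: stability, $\norm[\cL(H)]{E_{h,\Delta t}^{n}P_h} \le C$ and $\norm[\cL(H)]{E_{h,\Delta t}^{n}P_h F^1_t} \le C$, the discrete smoothing bound $\norm[\cL(V_h)]{(-A_h)^{1/2}E_{h,\Delta t}^{n}} \le C t_n^{-1/2}$, and the fully discrete error estimate in the smooth-to-rough range, $\norm{\Gamma_n v} \le C(h^2 + \Delta t)\, t_n^{-(2-\mu)/2}\norm[\mu]{v}$ for $-1 \le \mu \le 2$ and $t_n > 0$; in particular, combining the $\mu = 0$ case with the trivial bound for small $t_n$, the time-uniform form $\norm[\cL(H)]{\Gamma_n} \le C\min(1,\ (h^2 + \Delta t)\, t_n^{-1})$, and, for $\mu = -1$, $\norm[\cL(\dot{H}^{-1},H)]{\Gamma_n} \le C\min(t_n^{-1/2},\ (h^2+\Delta t)\,t_n^{-3/2})$. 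Together with $\norm[\cL(\dot{H}^1)]{P_h}\le C$, the assumed Ritz-projector bound, and the embeddings $\dot{H}^1 \subseteq H \subseteq \dot{H}^{-1}$, these are the classical Thom\'ee-type estimates for backward-Euler Galerkin discretizations of analytic semigroups; one quotes them from \cite[Chapter 3]{K14} or re-derives them from the resolvent calculus for $A$ and $A_h$.

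Since $x_0 \in \dot{H}^1$ $P$-a.s.\ with moments of all orders, the $\mu=1$ estimate gives $\norm[L^p(\Omega;H)]{\mathrm{I}_N}\le C(h + \Delta t^{1/2})$. For $\mathrm{III}_N = \int_0^T (E(T-s)G(s) - \overline{E}(s)\overline{G}(s))\dd W(s)$, with $\overline{E}(s) := E_{h,\Delta t}^{\,N-j}P_h$ and $\overline{G}(s) := G(t_j)$ for $s\in[t_j,t_{j+1})$, the integrand is deterministic, so $\mathrm{III}_N$ is a centred $H$-valued Gaussian and, for every $p \ge 1$,
\[
  \norm[L^p(\Omega;H)]{\mathrm{III}_N} \le C_p\left(\int_0^T \norm[\cLnull]{E(T-s)G(s) - \overline{E}(s)\overline{G}(s)}^2 \dd s\right)^{1/2}
\]
by the It\^o isometry and equivalence of Gaussian moments. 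Splitting the integrand into a noise time-regularity piece ($\norm[\cLnull]{E(T-s)(G(s)-G(t_j))}\le C\Delta t^{1/2}$, Assumption~\ref{assumptions:1}(ii)), a semigroup time-continuity piece ($\norm[\cL(H)]{E(T-s)-E(T-t_j)}\le C\min(1,\Delta t\,(T-s)^{-1})$), and a fully discrete piece ($\le\norm[\cL(H)]{\Gamma_{N-j}}$), and using $\int_0^T\min(1,\eta\,(T-s)^{-1})^2\dd s \le C\eta$ (split at $T-s=\eta$), one gets $\norm[L^p(\Omega;H)]{\mathrm{III}_N}\le C(h + \Delta t^{1/2})$.

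The drift error $\mathrm{II}_N = \sum_{j<N}\int_{t_j}^{t_{j+1}}(E(T-s)F(s,X(s)) - E_{h,\Delta t}^{\,N-j}P_h F(t_j,X(t_j)))\dd s$ is handled in the same spirit: on each subinterval add and subtract $E(T-s)F(t_j,X(s))$ and $E(T-s)F(t_j,X(t_j))$ to split $\mathrm{II}_N$ into (a) a quadrature error $\int_{t_j}^{t_{j+1}}E(T-s)[(F(s,X(s))-F(t_j,X(s)))+F^1_{t_j}(X(s)-X(t_j))]\dd s$, estimated by Assumption~\ref{assumptions:1}(iii), $\norm[\cL(H,\dot{H}^{-1})]{F^1_{t_j}}\le C$, the temporal H\"older regularity $\norm[L^p(\Omega;H)]{X(s)-X(t_j)}\le C\Delta t^{1/2}$ (a standard consequence of the mild formula, $\sup_t\norm[L^p(\Omega;\dot{H}^s)]{X(t)}<\infty$ for $s<1$, and the integral representation of the semigroup increment), and the integrable singularity $\norm[\cL(\dot{H}^{-1},H)]{E(T-s)}\le C(T-s)^{-1/2}$; and (b) a fully discrete error $\int_{t_j}^{t_{j+1}}(E(T-s)-E_{h,\Delta t}^{\,N-j}P_h)F(t_j,X(t_j))\dd s$ controlled by the $\mu=-1$ estimate for $\Gamma_{N-j}$ (the $\le\Delta t$ mismatch between $T-s$ and $T-t_j$ being harmless), whose $(T-s)^{-3/2}$ singularity is tamed by splitting $\int_0^T$ at $T-s\sim h^2+\Delta t$ and using the crude $(T-s)^{-1/2}$ bound near $T$. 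With $\sup_t\norm[L^p(\Omega;H)]{X(t)}<\infty$ and $L^p(\Omega;H)$-norms passed through the Bochner integrals, this gives $\norm[L^p(\Omega;H)]{\mathrm{II}_N}\le C(h + \Delta t^{1/2})$. Finally $\norm[L^p(\Omega;H)]{\mathrm{IV}_n} \le C\Delta t\sum_{j<n}\norm[L^p(\Omega;H)]{e_j}$, so collecting the four contributions yields $\norm[L^p(\Omega;H)]{e_n} \le C(h + \Delta t^{1/2}) + C\Delta t\sum_{j<n}\norm[L^p(\Omega;H)]{e_j}$, and the discrete Gronwall lemma delivers the rate; the uniform moment bound follows from $\norm[L^p(\Omega;H)]{X_{h,\Delta t}^{T}} \le \norm[L^p(\Omega;H)]{X(T)} + \norm[L^p(\Omega;H)]{e_N}$. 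The main obstacle throughout is the low regularity of the noise---$G(s)$ lies only in $\cLnull$, i.e.\ takes values in $H$---which both caps the spatial rate at $h$ (not $h^2$) and forces the repeated splitting of time integrals against the non-integrable smoothing singularities; the secondary difficulty is the negative-order Sobolev bookkeeping for the $\dot{H}^{-1}$-valued drift and its feedback.
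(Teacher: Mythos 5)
The paper contains no internal proof of this statement—it is imported verbatim from \cite[Theorem 3.14]{K14}—so the only comparison is with the cited source, and your sketch follows essentially that same standard route: variation-of-constants form with the discrete semigroup $E_{h,\Delta t}=(I-\Delta t A_h)^{-1}$, Thom\'ee-type deterministic estimates for the error operator $E(t_n)-E_{h,\Delta t}^{n}P_h$, It\^o isometry plus Gaussianity (equivalence of moments) for the stochastic convolution, temporal H\"older regularity of $X$ for the quadrature error, and a discrete Gronwall argument for the affine feedback. The decomposition and the rate bookkeeping (splitting the time integrals at $T-s\simeq h^2+\Delta t$ against the smoothing singularities) are correct. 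One detail needs repair: the claimed uniform stability bound $\norm[\cL(H)]{E_{h,\Delta t}^{n}P_h F^1_t}\le C$ cannot hold uniformly in $n$, because $F^1_t$ maps only into $\dot{H}^{-1}$; what is available is the discrete smoothing estimate $\norm[\cL(\dot{H}^{-1},H)]{E_{h,\Delta t}^{n}P_h}\le C t_n^{-1/2}$ (which you in fact list via $(-A_h)^{1/2}$ and the $\dot{H}^1$-stability of $P_h$). Consequently the feedback term obeys $\norm[L^p(\Omega;H)]{\mathrm{IV}_n}\le C\Delta t\sum_{j<n}t_{n-j}^{-1/2}\norm[L^p(\Omega;H)]{e_j}$ rather than the plain $C\Delta t\sum_{j<n}\norm[L^p(\Omega;H)]{e_j}$, so you must invoke the discrete Gronwall lemma with a weakly singular kernel; since $\Delta t\sum_{j<n}t_{n-j}^{-1/2}\le C\sqrt{T}$, this still closes the argument and the stated rate $\Op(h+\Delta t^{1/2})$ follows as you describe.
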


Since the goal of this paper is the approximation of the quantity $\E[\phi(X(T))]$, where $\phi$ is a smooth functional, the concept of weak convergence, i.e., convergence with respect to the expectation of functionals of the solution, is vital as it allows for the efficient tuning of the MC estimators in Sections~\ref{sec:mc} and~\ref{sec:mlmc}. In order to use a result from \cite{K14}, we need a stronger assumption. In particular, $F$ is a function of time only and $\phi$ is smooth. This is formalized below, see also Remark~\ref{rem:weak_convergence}.

\begin{assumption} The parameters of~\eqref{eq:spde} fulfill the following requirements.
	\label{assumptions:2}
	\hfill
	\begin{enumerate}[label=(\roman*)]
		\item 
		For some $\delta \in [1/2,1]$, there is a constant $C>0$ such that $G : [0,T] \to \cLnull $ and $F : [0,T] \to H$ satisfy
		\begin{equation*}
		\norm[\cLnull]{G(t_1)-G(t_2)} \le C |t_1-t_2|^{\delta}, \text{ for all } t_1, t_2 \in [0,T]
		\end{equation*}
		and
		\begin{equation*}
		\norm{F(t_1)-F(t_2)} \le C  |t_1-t_2|^{\delta}, \text{ for all } t_1, t_2 \in [0,T].
		\end{equation*}
		\item The functional $\phi$ is a member of $C^2_\mathrm{p}(H;\R)$, the space of all continuous mappings from $H$ to $\R$ which are twice continuously Fr\'echet-differentiable with at most polynomially growing derivatives. 
		\item The initial value $x_0 \in \dot{H}^1$ is deterministic.
	\end{enumerate}
\end{assumption}

With this assumption in place, we cite the following weak convergence result.

\begin{theorem}\cite[Theorem 5.12]{K14}
	\label{thm:weakconvergence}
	Under the assumptions of Theorem~\ref{thm:strongconvergence} and  Assumption~\ref{assumptions:2}, there is a constant $C > 0$ such that
	\begin{equation*}
	\left|\E\left[\phi(X(T))-\phi(X_{h,\Delta t}^{T})\right]\right| \le C\left(1 + |\log(h)|\right)\left( h^2 + {\Delta t}^\delta\right), \text{ for all } h, \Delta t \in  (0,1].
	\end{equation*}
\end{theorem}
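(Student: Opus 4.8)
As Theorem~\ref{thm:weakconvergence} is cited from \cite[Theorem~5.12]{K14}, I only indicate the route I would take; the argument in \cite{K14} treats the general semilinear case via the Kolmogorov backward equation together with Malliavin calculus, but in the present linear, additive setting a more elementary path is available. Under Assumption~\ref{assumptions:2} both terminal values are Gaussian: writing $u$ for the mild solution of the deterministic problem $\dot u = Au + F$, $u(0)=x_0$, and $u^{T}_{h,\Delta t}$ for its approximation obtained from~\eqref{eq:backward_euler} with $G\equiv 0$, one has $X(T) = u(T) + \cO$ and $X^{T}_{h,\Delta t} = u^{T}_{h,\Delta t} + \cO_{h,\Delta t}$, where $\cO$ and $\cO_{h,\Delta t}$ are the centred Gaussian stochastic convolutions, with covariance operators
\begin{equation*}
C = \Cov(\cO) = \int_0^T \gG(s)\gG(s)^* \dd s, \qquad C_{h,\Delta t} = \Cov(\cO_{h,\Delta t}) = \Delta t \sum_{j=0}^{N_{\Delta t}-1} \gG_{h,\Delta t}(t_j)\gG_{h,\Delta t}(t_j)^* \in \cL_1^{\mathrm{s}}(H),
\end{equation*}
where $\gG(s) = E(T-s)G(s) \in \cLnull$ and $\gG_{h,\Delta t}(t_j) = (I-\Delta t A_h)^{-(N_{\Delta t}-j)}P_h G(t_j) \in \cLnull$. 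The statement then reduces to two estimates, each of order $(1+|\log h|)(h^2 + \Delta t^{\delta})$: one for the deterministic (mean) error $\norm{u(T) - u^{T}_{h,\Delta t}}$, and one for the covariance error $\norm[\cL_1]{C - C_{h,\Delta t}}$ in trace norm.

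To pass from these to a bound on $\E[\phi(\cdot)]$ I would use a Gaussian interpolation (Lindeberg-type) argument. Realising $\cO$ and $\cO_{h,\Delta t}$ independently and setting, for $\gt \in [0,1]$,
\begin{equation*}
G(\gt) = \big( (1-\gt)\, u^{T}_{h,\Delta t} + \gt\, u(T) \big) + \sqrt{\gt}\,\cO + \sqrt{1-\gt}\,\cO_{h,\Delta t},
\end{equation*}
one has $G(0) \stackrel{d}{=} X^{T}_{h,\Delta t}$ and $G(1) \stackrel{d}{=} X(T)$. Differentiating $\gt \mapsto \E[\phi(G(\gt))]$ and applying Gaussian integration by parts in $H$ to the two terms arising from $\sqrt{\gt}\,\cO$ and $\sqrt{1-\gt}\,\cO_{h,\Delta t}$, the half-integer powers cancel and one is left with
\begin{equation*}
\E[\phi(X(T))] - \E[\phi(X^{T}_{h,\Delta t})] = \int_0^1 \Big( \E\big[\phi'(G(\gt))\big(u(T) - u^{T}_{h,\Delta t}\big)\big] + \tfrac12\,\E\big[\trace\big(\phi''(G(\gt))(C - C_{h,\Delta t})\big)\big] \Big) \dd\gt .
\end{equation*}
Since $G(\gt)$ is Gaussian with mean bounded in $H$ and covariance $\gt C + (1-\gt)C_{h,\Delta t}$ of trace bounded uniformly in $h,\Delta t,\gt$ (here one uses the stability bound of Theorem~\ref{thm:strongconvergence}), the polynomial growth of $\phi',\phi''$ together with Gaussian moment estimates gives $\sup_{\gt,h,\Delta t}\E[\norm{\phi'(G(\gt))}] < \infty$ and $\sup_{\gt,h,\Delta t}\E[\norm[\cL(H)]{\phi''(G(\gt))}] < \infty$, whence
\begin{equation*}
\big|\E[\phi(X(T)) - \phi(X^{T}_{h,\Delta t})]\big| \lesssim \norm{u(T) - u^{T}_{h,\Delta t}} + \norm[\cL_1]{C - C_{h,\Delta t}} .
\end{equation*}
Making this interpolation rigorous for $\phi \in C^2_{\mathrm p}(H;\R)$ requires a routine mollification of $\phi$ together with the uniform bounds above.

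The mean error $u(T) - u^{T}_{h,\Delta t}$ is exactly the error of the Galerkin--backward-Euler scheme for the deterministic parabolic problem with data $x_0 \in \dot{H}^1$ and forcing $s\mapsto F(s)\in H$ that is $\delta$-Hölder in time. Splitting it into the $x_0$- and $F$-contributions and invoking standard nonsmooth-data estimates for analytic semigroups (the spatial part using $\norm{R_h f - f}\le C h^s\norm[s]{f}$ and parabolic smoothing, the temporal part the corresponding backward-Euler estimates) gives $\norm{u(T) - u^{T}_{h,\Delta t}} \lesssim (1+|\log h|)(h^2 + \Delta t^{\delta})$, the logarithm being produced by a time integral of the form $\int_0^T \min(1,h^2 t^{-1})\dd t$.

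The remaining and principal task is $\norm[\cL_1]{C - C_{h,\Delta t}} \lesssim (1+|\log h|)(h^2 + \Delta t^{\delta})$. I would split this into a quadrature error — replacing $\Delta t\sum_j \gG_{h,\Delta t}(t_j)\gG_{h,\Delta t}(t_j)^*$ by the corresponding Bochner integral — and an approximation error comparing the discrete and continuous integrands. For each one uses the elementary identity $\Psi_1\Psi_1^* - \Psi_2\Psi_2^* = \Psi_1(\Psi_1 - \Psi_2)^* + (\Psi_1 - \Psi_2)\Psi_2^*$, which in trace norm yields $\norm[\cL_1]{\Psi_1\Psi_1^* - \Psi_2\Psi_2^*} \le \norm[\cL_2]{\Psi_1 - \Psi_2}\big(\norm[\cL_2]{\Psi_1} + \norm[\cL_2]{\Psi_2}\big)$, pairing a \emph{bounded} Hilbert--Schmidt factor with an \emph{error} factor. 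For the spatial/semigroup comparison the error factor is $\norm[\cLnull]{\big(E(T-s) - (I-\Delta t A_h)^{-\lceil(T-s)/\Delta t\rceil}P_h\big)G(s)} \lesssim \min\!\big(1,(h^2+\Delta t)(T-s)^{-1}\big)$ — already of order $h^2+\Delta t$ rather than the strong rate $h+\Delta t^{1/2}$, by parabolic smoothing — whose $s$-integral is $O\big((h^2+\Delta t^{\delta})(1+|\log h|)\big)$; this is the mechanism by which the covariance (hence weak) rate improves on the strong rate of Theorem~\ref{thm:strongconvergence}. The temporal/quadrature comparison is handled analogously, the error factor now being controlled through the $\delta$-Hölder continuity of $s\mapsto\gG(s)$ in $\cLnull$ — which combines the $\delta$-Hölder continuity of $G$ from Assumption~\ref{assumptions:2}(i) with the time regularity of $s\mapsto E(T-s)$ — yielding the $\Delta t^{\delta}$ contribution. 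I expect this covariance estimate to be the main obstacle: obtaining \emph{trace-norm} (not merely Hilbert--Schmidt) control, with the correct doubled spatial rate $h^2$ and the sharp temporal rate $\Delta t^{\delta}$ and the precise logarithmic factor, requires the full set of deterministic parabolic error estimates in the fractional spaces $\dot H^r$ and careful treatment of the integrable singularity of the integrand at $s=T$.
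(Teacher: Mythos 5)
The paper gives no proof of Theorem~\ref{thm:weakconvergence}; it is quoted from \cite[Theorem~5.12]{K14}, whose argument treats the semilinear case by a Malliavin-calculus duality argument (not, as you suggest, the Kolmogorov backward equation, which that approach deliberately avoids -- a minor point of attribution only). Your route is therefore genuinely different, and it is sound: under Assumption~\ref{assumptions:2} the drift depends on time only and $x_0$ is deterministic, so $X(T)$ and $X^T_{h,\Delta t}$ are Gaussian, and your Lindeberg interpolation is carried out correctly -- the $\sqrt{\theta}$-factors do cancel after Gaussian integration by parts, the resulting representation of the weak error in terms of the mean difference and $\trace\big(\phi''(G(\theta))(C-C_{h,\Delta t})\big)$ is right, and pairing $\norm[\cL(H)]{\phi''(G(\theta))}$ against the trace norm of $C-C_{h,\Delta t}$ is the correct duality, with the uniform Gaussian moment bounds supplied by Theorem~\ref{thm:strongconvergence}. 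This reduces the weak error to two deterministic estimates (mean error and trace-norm covariance error), and the mechanisms you identify for these -- parabolic smoothing doubling the spatial rate to $h^2$, the logarithm arising from $\int_0^T\min\big(1,h^2(T-s)^{-1}\big)\dd s$, and $\Delta t^{\delta}$ from the H\"older continuity of $G$ in $\cLnull$ -- are exactly the right ones, so I see no gap in the route, only deferred (standard, if laborious) nonsmooth-data error estimates for the fully discrete scheme, which you flag honestly as the crux. The comparison is instructive: your argument is more elementary and meshes naturally with the covariance-based philosophy of this paper (it essentially proves weak convergence by proving convergence of exactly the objects computed in \eqref{eq:mean_recursion}--\eqref{eq:cov_recursion}), but it is strictly less general than the cited proof -- it cannot accommodate a drift that is genuinely affine in $X$ (let alone semilinear) or multiplicative noise, since Gaussianity of the terminal value and the clean splitting into a deterministic mean plus a stochastic convolution would then be lost, whereas the Malliavin-based proof in \cite{K14} covers that setting.
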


\section{Covariance-based sampling in a Monte Carlo Setting}
\label{sec:mc}

For $Y \in L^1(\Omega;\R)$, the MC approximation of $\E[Y]$ is given by
\begin{equation*}
E_N\left[Y\right] = \frac{1}{N} \sum_{i=1}^{N} Y^{(i)},
\end{equation*} 
where $N \in \N$ is the number of independent realizations, $Y^{(i)}$, of $Y$. Any $Y \in L^2(\Omega;\R)$ satisfies for $N \in \N$ the inequality 
\begin{equation}
\label{eq:LLN}
\norm[L^2(\Omega;\R)]{\E\left[Y\right]-E_N\left[Y\right]} = \frac{1}{\sqrt{N}} \Var\left(Y\right)^{1/2} \le \frac{1}{\sqrt{N}} \norm[L^2(\Omega;\R)]{Y}.
\end{equation}
In order to accurately approximate $\E[\phi(X(T))]$ by MC using our fully discrete approximation $X^T_{h,\Delta t}$ of $X(T)$, we must therefore generate many samples of $X^T_{h,\Delta t}$. In practice one samples the vector $\bar{\mathbf{x}}_h^{T} = [x_1, x_2, \ldots, x_{N_h}]'$ of coefficients of the expansion $X_{h,\Delta t}^{T} = \sum^{N_h}_{k=1} x_k \Phi^h_k$, where $\mathbf{\Phi}^h = (\Phi^h_k)_{k=1}^{N_h}$ is a basis of $V_h$. 

The classical approach to this is that of \textit{path-based sampling}, i.e., solving the $N_{\Delta t}$ matrix equations corresponding to \eqref{eq:backward_euler} once for each sample $i = 1, 2, \ldots, N$ (Algorithm~\ref{alg:path_mc}). These systems are obtained by expanding~\eqref{eq:backward_euler} on $\mathbf{\Phi}^h$ and applying $\inpro{\Phi^{h}_i}{\cdot}$ to each side of this equality for $i = 1, 2, \ldots, N_h$. 
\begin{algorithm}[h!]
	\begin{algorithmic}[1]
		\STATE $result = 0$
		\FOR{$i=1$ to $N$}
		\STATE Sample increments of a realization $W^{(i)}$ of the $Q$-Wiener process $W$
		\STATE Compute $\bar{\mathbf{x}}_h^{T} = [x_1, x_2, \ldots, x_{N_h}]'$ directly by solving the matrix equations corresponding to the drift-implicit Euler--Maruyama system \eqref{eq:backward_euler} driven by $W^{(i)}$
		\STATE Compute $\phi(X^T_{h,\Delta t}) = \phi\left(\sum^{N_h}_{k=1} x_k \Phi^h_k\right)$
		\STATE $result = result + \phi(X^T_{h,\Delta t})^{(i)}/N$
		\ENDFOR
		\STATE $E_N\left[\phi(X^T_{h,\Delta t})\right] = result$
	\end{algorithmic}
	\caption{Path-based MC method of computing an estimate $E_N[\phi(X^T_{h,\Delta t})]$ of $\E[\phi(X(T))]$ 
	}
	\label{alg:path_mc}
\end{algorithm}

Our alternative approach is that of \textit{covariance-based sampling} where $\Cov(X_{h,\Delta t})$ is computed, yielding the covariance matrix of $\bar{\mathbf{x}}_h^{T}$ which is used to generate samples of $\bar{\mathbf{x}}_h^{T}$ directly (Algorithm~\ref{alg:cov_mc}). This is possible since Assumption~\ref{assumptions:1} ensures that $\bar{\mathbf{x}}_h^{T}$ is Gaussian. To see this, we first introduce the abbreviations $R_{h,{\Delta t}}^{} = (I_H - \Delta t A_h)$, $F^{1,j}_{h,{\Delta t}} = \left(I_H + \Delta t P_h F^1_{t_j} \right)$, and $F^{2,j}_{h,{\Delta t}} = \Delta t P_h F^2_{t_j}$, so that \eqref{eq:backward_euler} can be written as
\begin{equation}
\label{eq:backward_euler_2}
R_{h,{\Delta t}} X_{h,\Delta t}^{t_{j+1}} =  F_{h,\Delta t}^{1,j} X_{h,\Delta t}^{t_j} + F^{2,j}_{h,{\Delta t}} + P_h G(t_j) \Delta W^{j},
\end{equation}
for $j=0,1, \ldots, N_{\Delta t}-1$. 
The $V_h$-valued random variable $X_{h,\Delta t}^{t_{j+1}}$ is Gaussian by induction, due to the facts that affine transformations of Gaussian random variables remain Gaussian, that $X_{h,\Delta t}^{t_j}$ and $P_h G(t_j) \Delta W^{j}$ are independent, that the recursion is started at a (possibly degenerate) Gaussian random variable, and that $P_h G(t_j) \Delta W^{j}$ itself is Gaussian, which can be seen by, for example, \cite[Theorems 4.6, 4.27]{DPZ14}. That $\bar{\mathbf{x}}_h^{T}$ is an $\R^{N_h}$-valued Gaussian random variable is then a consequence of the equality $\inpro[\R^{N_h}]{\bar{\mathbf{x}}_h^{T}}{\mathbf{a}} = \inpro[H]{X_{h,\Delta t}^{T}}{a_h}$, where $\mathbf{a} = [a_1, a_2, \ldots, a_{N_h}]' \in \R^{N_h}$ is arbitrary and $a_h = \sum^{N_h}_{i=1} (\mathbf{M}_h^{-1} \mathbf{a})_i \phi_i$, where $\mathbf{M}_h$ is the symmetric positive definite matrix with entries $m_{i,j} = \inpro{\phi_i}{\phi_j}$. 

In the next theorem we introduce a scheme for the calculation of $\Cov(X_{h,\Delta t})$, inspired by the derivation of stability properties of SPDE approximation schemes in \cite{LPT17}, see also \cite{BS12}. 

\begin{theorem}
	\label{thm:mc_cov}
	Let the terms of \eqref{eq:spde} satisfy Assumption~\ref{assumptions:1} and let $(X_{h,\Delta t}^{t_{j}})_{j=0}^{N_{\Delta t}}$ be given by~\eqref{eq:backward_euler}. Then $\mu^{T} = \E[X_{h,\Delta t}^{{T}}] \in V_h$ and $\Sigma^{T} = \Cov(X_{h,\Delta t}^{{T}}) \in V_h^{\otimes 2}$ are given by the recursions
	\begin{equation}
	\label{eq:mean_recursion}
	R_{h,{\Delta t}}^{} \mu^{t_{j+1}} =  F^{1,j}_{h,{\Delta t}}\mu^{t_j} +  F^{2,j}_{h,{\Delta t}},
	\end{equation}
	\begin{equation}
	\label{eq:cov_recursion}
	\left( R_{h,{\Delta t}}^{} \right)^{\otimes 2} \Sigma^{t_{j+1}} =  \left( F^{1,j}_{h,{\Delta t}} \right)^{\otimes 2} \Sigma^{t_{j}} + \E\left[\left(P_h G(t_j) \Delta W^{j}\right)^{\otimes 2}\right]
	\end{equation}
	for $j=0, 1, \ldots, N_{\Delta t} -1$.
\end{theorem}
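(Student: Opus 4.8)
The plan is to establish both recursions by taking expectations of the defining recursion \eqref{eq:backward_euler_2}. First I would verify the mean recursion \eqref{eq:mean_recursion}: since $X_{h,\Delta t}^{t_j} \in L^2(\Omega;V_h)$ (which follows from Theorem~\ref{thm:strongconvergence}, or directly by induction from the fact that each term on the right-hand side of \eqref{eq:backward_euler_2} is square-integrable), all the linear operators involved are bounded on the finite-dimensional space $V_h$, so expectation commutes with them. Applying $\E[\cdot]$ to \eqref{eq:backward_euler_2} and using that $\E[P_h G(t_j)\Delta W^j] = 0$ (the increment $\Delta W^j$ has zero mean and $P_h G(t_j)$ is a fixed bounded operator) gives $R_{h,\Delta t}\mu^{t_{j+1}} = F^{1,j}_{h,\Delta t}\mu^{t_j} + F^{2,j}_{h,\Delta t}$. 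That $\mu^T \in V_h$ is immediate since $X_{h,\Delta t}^T$ takes values in $V_h$ and $V_h$ is closed.

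For the covariance recursion, I would subtract the mean recursion \eqref{eq:mean_recursion} from \eqref{eq:backward_euler_2} to obtain, writing $Y^{t_j} = X_{h,\Delta t}^{t_j} - \mu^{t_j}$,
\begin{equation*}
R_{h,\Delta t} Y^{t_{j+1}} = F^{1,j}_{h,\Delta t} Y^{t_j} + P_h G(t_j)\Delta W^j .
\end{equation*}
Then I would form the tensor square of both sides. The key algebraic fact is that for a bounded operator $B$ on $H$ and a random variable $Z$, $(BZ)^{\otimes 2} = B^{\otimes 2} Z^{\otimes 2}$ as elements of $H^{\otimes 2}$, and more generally $\E[(BZ_1)\otimes(BZ_2)] = B^{\otimes 2}\E[Z_1 \otimes Z_2]$; applied with $B = R_{h,\Delta t}$ on the left this yields $(R_{h,\Delta t})^{\otimes 2}\E[(Y^{t_{j+1}})^{\otimes 2}] = (R_{h,\Delta t})^{\otimes 2}\Sigma^{t_{j+1}}$. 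Expanding the right-hand side bilinearly gives four terms:
\begin{equation*}
\bigl(F^{1,j}_{h,\Delta t}\bigr)^{\otimes 2}\E[(Y^{t_j})^{\otimes 2}] + F^{1,j}_{h,\Delta t}Y^{t_j}\otimes P_h G(t_j)\Delta W^j + P_h G(t_j)\Delta W^j \otimes F^{1,j}_{h,\Delta t}Y^{t_j} + \bigl(P_h G(t_j)\Delta W^j\bigr)^{\otimes 2},
\end{equation*}
understood inside $\E[\cdot]$. The two cross terms vanish: $X_{h,\Delta t}^{t_j}$ is $\cF_{t_j}$-measurable while $\Delta W^j$ is independent of $\cF_{t_j}$ with zero mean, so $\E[Y^{t_j}\otimes P_h G(t_j)\Delta W^j] = \E[Y^{t_j}]\otimes \E[P_h G(t_j)\Delta W^j] = 0$ by independence, and similarly for the other. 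The surviving terms give exactly \eqref{eq:cov_recursion}, with $\Sigma^{t_j} = \E[(Y^{t_j})^{\otimes 2}]$ by definition of the covariance; and $\Sigma^T \in V_h^{\otimes 2}$ since $Y^T$ is $V_h$-valued.

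The main technical point to get right is the interchange of expectation (a Bochner integral) with the tensor-square and with the bounded operators, and the justification of the factorization of the expectation of a tensor product of independent random variables. On the finite-dimensional space $V_h$ this is routine: the tensor-square map is continuous and homogeneous of degree two but not linear, so one cannot simply ``pull $\E$ inside'' the squaring — instead one works with the bilinear expansion above, where every term is a genuine $\E[Z_1 \otimes Z_2]$ and linearity of the Bochner integral applies termwise; the independence-factorization $\E[Z_1 \otimes Z_2] = \E[Z_1]\otimes\E[Z_2]$ for independent integrable $Z_1, Z_2$ is standard (it suffices to test against $f \otimes g$ and use the scalar version). I expect this bookkeeping — keeping the four bilinear terms straight and invoking the right independence/measurability facts — to be the only real obstacle; everything else is direct substitution into \eqref{eq:backward_euler_2}.
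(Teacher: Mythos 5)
Your argument is correct and rests on the same mechanism as the paper's proof: tensorize the one-step recursion, expand bilinearly into four terms, and kill the two cross terms using that $\Delta W^j$ is independent of $\cF_{t_j}$ (hence of $X_{h,\Delta t}^{t_j}$) and has zero mean. The only real difference is the order of operations: you subtract the mean recursion \eqref{eq:mean_recursion} from \eqref{eq:backward_euler_2} first and tensorize the centered recursion for $Y^{t_j}=X_{h,\Delta t}^{t_j}-\mu^{t_j}$, whereas the paper tensorizes the uncentered recursion and subtracts the tensorized mean recursion at the end. Your ordering has the small advantage that the affine term $F^{2,j}_{h,\Delta t}$ cancels immediately in the centering step, so you get \eqref{eq:cov_recursion} in one pass, while the paper first treats the case $F^2_t=0$ and then notes that the $F^2$-terms drop out upon subtracting the tensorized mean; the content is otherwise identical, and your handling of the measurability, independence-factorization and the identity $(B\cdot)^{\otimes 2}=B^{\otimes 2}(\cdot)^{\otimes 2}$ is exactly what is needed.
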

\begin{proof}
	We first prove the result assuming $F^2_t=0$ for all $t \in [0,T]$. The recursion scheme \eqref{eq:mean_recursion} for the mean follows by applying $\E[\cdot]$ to both sides of~\eqref{eq:backward_euler_2}, noting that $\E[\cdot]$ commutes with linear operators and that $\Delta W^{j}$ has zero mean. 
	
	For the covariance recursion scheme \eqref{eq:cov_recursion}, we first tensorize \eqref{eq:backward_euler_2} to get 
	\begin{equation}
	\label{eq:tensor_scheme}
	\begin{split}
	\left( R_{h,{\Delta t}}^{} \right)^{\otimes 2} \left(X_{h,\Delta t}^{t_{j+1}}\right)^{\otimes 2} &= \left(  F^{1,j}_{h,{\Delta t}}   \right)^{\otimes 2}\left(X_{h,\Delta t}^{t_{j}}\right)^{\otimes 2} +  F^{1,j}_{h,{\Delta t}} X_{h,\Delta t}^{t_j}  \otimes P_h G(t_j) \Delta W^{j} \\ &\quad\quad+ P_h G(t_j) \Delta W^{j} \otimes  F^{1,j}_{h,{\Delta t}} X_{h,\Delta t}^{t_j}  + \left(P_h G(t_j) \Delta W^{j}\right)^{\otimes 2}.
	\end{split}
	\end{equation}
	Since $\Delta W^j$ is independent of $X_{h,\Delta t}^{t_j}$ and has zero mean,
	\begin{equation*}
	\E\left[F^{1,j}_{h,{\Delta t}} X_{h,\Delta t}^{t_j}  \otimes P_h G(t_j) \Delta W^{j} \right] = \E\left[F^{1,j}_{h,{\Delta t}} X_{h,\Delta t}^{t_j}\right] \otimes \E\left[P_h G(t_j) \Delta W^{j}\right] = 0 
	\end{equation*}
	and similarly, the third term has zero mean.  
	Thus, the mean of \eqref{eq:tensor_scheme} is given by 
	\begin{equation*}
	\begin{split}
	&\left( R_{h,{\Delta t}} \right)^{\otimes 2} \E\left[\left(X_{h,\Delta t}^{t_{j+1}}\right)^{\otimes 2}\right]  = \left( F^{1,j}_{h,{\Delta t}}  \right)^{\otimes 2} \E\left[\left(X_{h,\Delta t}^{t_{j}}\right)^{\otimes 2}\right] + 
	\E\left[\left(P_h G(t_j) \Delta W^{j}\right)^{\otimes 2}\right].
	\end{split}
	\end{equation*}
	Tensorizing \eqref{eq:mean_recursion}, the recursion scheme for the mean, we obtain 
	\begin{equation*}
	\left( R_{h,{\Delta t}}  \right)^{\otimes 2} \left(\mu^{t_{j+1}}\right)^{\otimes 2} =  \left( F^{1,j}_{h,{\Delta t}}  \right)^{\otimes 2} \left(\mu^{t_{j}}\right)^{\otimes 2}
	\end{equation*}
	and by subtracting this from the previous equation we end up with \eqref{eq:cov_recursion}. The general case of a non-zero $F^{2,j}_{h,\Delta t}$ term is proven in the same way, noting that all terms involving this disappears from \eqref{eq:cov_recursion} when subtracting the tensorized mean in the last step.
\end{proof}
\begin{remark}
	Note that this computation can easily be extended to the case of linear multiplicative noise. However, $X_{h,\Delta t}^{{T}}$ is then non-Gaussian, so knowledge of its covariance is not sufficient to compute samples of it.
\end{remark}
\begin{algorithm}[h!]
	\begin{algorithmic}[1]
		\STATE Form the mean vector $\mathbf{\mu}$ and covariance matrix $\mathbf{\Sigma}$ of $\bar{\mathbf{x}}_h^{T}$ by solving the matrix equations corresponding to \eqref{eq:mean_recursion} and \eqref{eq:cov_recursion}
		\STATE $result = 0$
		\FOR{$i=1$ to $N$}
		\STATE Sample $\bar{\mathbf{x}}_h^{T} = [x_1, x_2, \ldots, x_{N_h}]' \sim N(\mathbf{\mu}, \mathbf{\Sigma})$
		\STATE Compute $\phi(X^T_{h,\Delta t}) = \phi\left(\sum^{N_h}_{k=1} x_k \Phi^h_k\right)$
		\STATE $result = result + \phi(X^T_{h,\Delta t})^{(i)}/N$
		\ENDFOR
		\STATE $E_N\left[\phi(X^T_{h,\Delta t})\right] = result$
	\end{algorithmic}
	\caption{Covariance-based MC method of computing an estimate $E_N[\phi(X^T_{h,\Delta t})]$ of $\E[\phi(X(T))]$ 
	}
	\label{alg:cov_mc}
\end{algorithm}

Next, we compare the computational complexities of Algorithms~\ref{alg:path_mc} and~\ref{alg:cov_mc}. 
Combining \eqref{eq:LLN} with Theorem~\ref{thm:weakconvergence} yields, using the triangle inequality, for a constant $C > 0$,
\begin{align*}
&\norm[L^2(\Omega;\R)]{\E[\phi(X(T))] - E_N[\phi(X_{h,\Delta t}^{T})]} \\ 
&\quad \quad \le \norm[L^2(\Omega;\R)]{\E[\phi(X(T))] - \E[\phi(X_{h,\Delta t}^{T})]} + \norm[L^2(\Omega;\R)]{\E[\phi(X_{h,\Delta t}^{T})] - E_N[\phi(X_{h,\Delta t}^{T})]} \\
&\quad \quad \le C \left(\left(1 + |\log(h)| \right) \left(h^2 + {\Delta t}^\delta \right) + N^{-1/2}\right).
\end{align*}
To balance this we couple $\Delta t$ and $N$ by ${\Delta t}^\delta \simeq N^{-1/2} \simeq h^2$. We make the following assumption for the computational complexities of the algorithms.
\begin{assumption}
	\label{assumptions:3}
	There is a $d\in \N$ such that the cost of computing one step of \eqref{eq:backward_euler} is $\Op(h^{-\alpha d})$, where $\alpha \in [1,2]$, while the cost of one step of the tensorized system \eqref{eq:cov_recursion} is $\Op(h^{-2d})$. Moreover, the cost of sampling a Gaussian $V_h$-valued random variable with covariance given by \eqref{eq:cov_recursion} is $\Op(h^{-2d})$. Finally, for Algorithm~i, $i=1,2,3,4$, any additional offline cost is $\Op(h^{-\omega_{i} d})$ for some $\omega_i \in \N$. 
\end{assumption}
\begin{remark}
	In our model problem Example~\ref{ex:stoch-conv}, $d$ is the dimension of the space $\R^d$ in which the domain $D$ is contained and $\dim(V_h)=\Op(h^{-d})$. See Section~\ref{sec:implementation} for a discussion of this assumption. 
\end{remark}

In order to compute an approximation of $\E[\phi(X(T))]$, if we use Algorithm~\ref{alg:path_mc}, we need to solve the drift-implicit Euler--Maruyama system \eqref{eq:backward_euler} $N \cdot N_{\Delta t} = T N \Delta t^{-1}$ times, making the total (online) cost $\Op(N{\Delta t}^{-1}h^{-\alpha d}) = \Op(h^{-4-\alpha d-2/\delta})$. If we use Algorithm~\ref{alg:cov_mc} instead, we need to solve \eqref{eq:cov_recursion} $N_{\Delta t} = T{\Delta t}^{-1}$ times and then sample from the resulting covariance $N$ times, making the total cost $\Op(h^{-2d-2/\delta}) + \Op(h^{-2d-4}) = \Op(h^{-2d-4})$. We collect these observations in the following proposition, which ends this section.
\begin{proposition}
	\label{prop:singlelevel_costs}
	Let $\phi$ and the terms of \eqref{eq:spde} satisfy Assumptions~\ref{assumptions:1} and \ref{assumptions:2}. Assume that $X_{h,\Delta t}^{T}$ is given by $\eqref{eq:backward_euler}$ and that $X(T)$ is the solution to \eqref{eq:spde} at time $T < \infty$. If $N \simeq h^{-4}$ and $\Delta t \simeq h^{2/\delta}$ then there is a constant $C > 0$ such that
	\begin{equation*}
	\norm[L^2(\Omega;\R)]{\E[\phi(X(T))] - E_N[\phi(X_{h,\Delta t}^{T})]} \le C \left(1 + |\log(h)| \right) h^2, \text{ for all } h > 0.
	\end{equation*}
	Under Assumption~\ref{assumptions:3}, the cost of computing $E_N[X_{h,\Delta t}^{T}]$ with Algorithm~\ref{alg:path_mc} is bounded by $\Op(\max(h^{-4-\alpha d-2/\delta},h^{-\omega_1 d}))$ and with Algorithm~\ref{alg:cov_mc} by $\Op(\max(h^{-2d-4},h^{-\omega_2 d}))$.
\end{proposition}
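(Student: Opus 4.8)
The plan is to treat the two halves of the statement independently: first the $L^2(\Omega;\R)$-error bound under the coupling $N \simeq h^{-4}$, $\Delta t \simeq h^{2/\delta}$, and then the two cost estimates under Assumption~\ref{assumptions:3}. Both are essentially a packaging of results already recorded above, so the proof will be short.

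For the error bound I would start from the triangle-inequality decomposition displayed immediately before the proposition, which splits $\|\E[\phi(X(T))] - E_N[\phi(X_{h,\Delta t}^T)]\|_{L^2(\Omega;\R)}$ into the weak (bias) part $\|\E[\phi(X(T))] - \E[\phi(X_{h,\Delta t}^T)]\|$ and the statistical part $\|\E[\phi(X_{h,\Delta t}^T)] - E_N[\phi(X_{h,\Delta t}^T)]\|$. The first is bounded by $C(1+|\log h|)(h^2 + \Delta t^\delta)$ via Theorem~\ref{thm:weakconvergence}, and the second by $N^{-1/2}\,\Var(\phi(X_{h,\Delta t}^T))^{1/2}$ via \eqref{eq:LLN}. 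The one auxiliary fact needed is that this variance is bounded uniformly in $h,\Delta t$: since $\phi \in C^2_{\mathrm p}(H;\R)$ its Fréchet derivative grows at most polynomially, so by the mean value theorem $\phi$ itself satisfies $|\phi(f)| \le C(1+\norm{f}^m)$ for some $m$, whence $\norm[L^2(\Omega;\R)]{\phi(X_{h,\Delta t}^T)} \le C(1+\norm[L^{2m}(\Omega;H)]{X_{h,\Delta t}^T}^m)$, and this is bounded uniformly by the first assertion of Theorem~\ref{thm:strongconvergence}. Substituting $\Delta t \simeq h^{2/\delta}$, so $\Delta t^\delta \simeq h^2$, and $N \simeq h^{-4}$, so $N^{-1/2} \simeq h^2$, into the two bounds and absorbing constants yields the claimed $C(1+|\log h|)h^2$.

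For the cost of Algorithm~\ref{alg:path_mc} I would simply count operations: the loop runs $N \simeq h^{-4}$ times, and each pass solves the drift-implicit Euler system \eqref{eq:backward_euler} over $N_{\Delta t} = T/\Delta t \simeq h^{-2/\delta}$ time steps at $\Op(h^{-\alpha d})$ per step by Assumption~\ref{assumptions:3}, while sampling the Wiener increments and evaluating $\phi$ contribute only lower-order terms; adding the offline cost $\Op(h^{-\omega_1 d})$ gives $\Op(\max(h^{-4-\alpha d-2/\delta}, h^{-\omega_1 d}))$. For Algorithm~\ref{alg:cov_mc}, the one-time assembly of $\mu$ and $\Sigma$ through \eqref{eq:mean_recursion}--\eqref{eq:cov_recursion} costs $N_{\Delta t} \simeq h^{-2/\delta}$ steps at $\Op(h^{-2d})$ each, i.e. $\Op(h^{-2d-2/\delta})$; the loop then draws $N \simeq h^{-4}$ Gaussian samples from the computed covariance at $\Op(h^{-2d})$ apiece, again by Assumption~\ref{assumptions:3}, i.e. $\Op(h^{-2d-4})$. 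Since $\delta \ge 1/2$ forces $2/\delta \le 4$, the assembly term is dominated, and together with the offline cost $\Op(h^{-\omega_2 d})$ this gives $\Op(\max(h^{-2d-4}, h^{-\omega_2 d}))$.

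There is no genuine obstacle here; the theorem is a bookkeeping statement combining Theorems~\ref{thm:strongconvergence} and~\ref{thm:weakconvergence}, inequality~\eqref{eq:LLN}, and Assumption~\ref{assumptions:3}. The only places requiring a moment's care are the uniform variance bound (the polynomial-growth-plus-moment-bound argument above) and checking that the steps omitted from the operation count — forming $\phi(X_{h,\Delta t}^T)$, sampling increments, and the linear-algebra setup involving $\mathbf{M}_h$ and $R_{h,\Delta t}$ — are genuinely subsumed in the stated orders or in the offline term $\Op(h^{-\omega_i d})$.
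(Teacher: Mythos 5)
Your proposal is correct and follows essentially the same route as the paper: the paper establishes this proposition in the text immediately preceding it, via the same triangle-inequality split into bias (Theorem~\ref{thm:weakconvergence}) and statistical error (\eqref{eq:LLN}), the same coupling ${\Delta t}^\delta \simeq N^{-1/2} \simeq h^2$, and the same operation counts $N N_{\Delta t}$ solves at $\Op(h^{-\alpha d})$ for Algorithm~\ref{alg:path_mc} versus $N_{\Delta t}$ tensorized solves plus $N$ samplings at $\Op(h^{-2d})$ for Algorithm~\ref{alg:cov_mc}. Your explicit justification of the uniform bound on $\Var(\phi(X^T_{h,\Delta t}))$ via polynomial growth of $\phi$ and the uniform moments of Theorem~\ref{thm:strongconvergence} is a detail the paper leaves implicit here (it appears in the proof of Proposition~\ref{prop:multilevel_costs}), and it is a welcome addition rather than a deviation.
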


\section{Covariance-based Sampling in a Multilevel Monte Carlo Setting}
\label{sec:mlmc}

For our goal of estimating $\E[\phi(X(T))]$, the MLMC algorithm can be a more efficient alternative to the standard MC algorithm. For a sequence $(Y_\ell)_{\ell \in \N_0}$ of random variables in $L^2(\Omega;\R)$ approximating $Y \in L^2(\Omega;\R)$, where the index $\ell \in \N_0$ is referred to as a level, the \emph{MLMC estimator $E^L[Y_L]$} of $\E[Y_L]$ is, for $L \in \N$, defined by
\begin{equation*}
E^L[Y_L] = E_{N_0}[Y_0] + \sum_{\ell=1}^L E_{N_\ell}[Y_\ell - Y_{\ell-1}],
\end{equation*}
where $(N_\ell)_{\ell=0}^{L}$ are level specific numbers of samples in the respective MC estimators. 

To apply this algorithm in our setting, we take a sequence $(X^T_\ell)_{\ell \in \N_0}$ of approximations of $X(T)$, given by $X^T_\ell = X^T_{h_\ell, \Delta t_\ell}$, where $(h_\ell)_{\ell \in \N_0}$ is a decreasing sequence of mesh sizes and $\Delta t_\ell^\delta \simeq h_\ell^2$, so that $(\phi(X^T_\ell))_{\ell \in \N_0}$ becomes a sequence approximating $\phi(X(T))$. For notational convenience, we set $\phi(X^T_{-1}) = 0$. Computing $E^L \left[\phi\left(X^T_L\right)\right]$ involves, for each $\ell = 1, 2, \ldots, L$, sampling $\phi\left(X^T_\ell\right) - \phi\left(X^T_{\ell-1}\right)$ $N_\ell$ times (we specify how to choose the sample sizes below). For this it is key that $X^T_\ell$ on the \textit{fine} level $\ell$ and $X^T_{\ell-1}$ on the \textit{coarse} level $\ell - 1$ are positively correlated. In the classical path-based method (Algorithm~\ref{alg:path_mlmc}, see also \cite{A13, BL12a, BLS13, LP17}), this is achieved by computing them on the same discrete realization of $W$, assuming that the family $(V_h)_{h \in (0,1]}$ is nested. 
\begin{algorithm}[h!]
	\begin{algorithmic}[1]
		\STATE $result = 0$
		\FOR{$\ell=0$ to $L$}
		\FOR{$i=1$ to $N_\ell$}
		\STATE Sample increments of a realization $W^{(i)}$ of the $Q$-Wiener process $W$
		\STATE Compute $\bar{\mathbf{x}}_{h_{\ell-1}}^{T} = [x^{\ell-1}_1, x^{\ell-1}_2, \ldots, x^{\ell-1}_{N_{h_{\ell-1}}}]'$ by solving the matrix equations corresponding to \eqref{eq:backward_euler} driven by $W^{(i)}$
		\STATE Compute $\bar{\mathbf{x}}_{h_{\ell}}^{T} = [x^\ell_1, x^\ell_2, \ldots, x^\ell_{N_{h_{\ell}}}]'$ by solving the matrix equations corresponding to \eqref{eq:backward_euler} driven by $W^{(i)}$
		\STATE Compute $\phi(X^T_\ell) - \phi(X^T_{\ell-1}) =  \phi\left(\sum^{N_{h_{\ell}}}_{k=1} x^{\ell}_k \Phi^{h_{\ell}}_k\right) - \phi\left(\sum^{N_{h_{\ell-1}}}_{k=1} x^{\ell-1}_k \Phi^{h_{\ell-1}}_k\right)$
		\STATE $result = result + \left(\phi(X^T_\ell) - \phi(X^T_{\ell-1})\right)/N_\ell$
		\ENDFOR
		\ENDFOR
		\STATE $E^L\left[\phi(X^T_L)\right] = result$
	\end{algorithmic}
	\caption{Path-based MLMC method of computing an estimate $E^L \left[\phi\left(X^T_L\right)\right]$ of $\E[\phi(X(T))]$}
	\label{alg:path_mlmc}
\end{algorithm}

To introduce our alternative covariance-based method, the path-based sampling is rewritten as a system on the space $V_{h'} \oplus V_h$. Consider to this end, for $h, h', \Delta t, \Delta t' \in (0,1]$, a pair $\big((X_{h',\Delta t'}^{t'_{j}})_{j=0}^{N_{\Delta t'}}, (X_{h,\Delta t}^{t_{j}})_{j=0}^{N_{\Delta t}}\big)$ of approximations of $X$, given by the drift-implicit Euler--Maruyama scheme \eqref{eq:backward_euler}. Assume further that they are nested in time, i.e., that $\Delta t' = K \Delta t$ for some $K \in \N$ with $K > 1$. We create an extension $(\hat{X}_{h',\Delta t}^{t_{j}})_{j=0}^{N_{\Delta t}}$ of the coarse approximation $(X_{h',\Delta t'}^{{t'_{j}}})_{j=0}^{N_{\Delta t'}}$ to the finer time grid by $\hat{X}_{h',\Delta t}^{t_{0}} = X_{h',\Delta t'}^{{t'_{0}}}$ and 
\begin{equation*}
\hat{R}^j_{h',{\Delta t}} \hat{X}_{h',\Delta t}^{t_{j+1}} =  \hat{F}_{h',\Delta t}^{1,j} \hat{X}_{h',\Delta t}^{t_j} + \hat{F}^{2,j}_{h',{\Delta t}} + P_{h'} \hat{G}(t_j) \Delta W^{j},
\end{equation*}
for $j=0, 1, \ldots, N_{\Delta t} - 1$, where $\Delta W^j = W(t_{j+1})-W(t_{j})$. The operators are given by 
\begin{equation*}
\hat{R}^j_{h',{\Delta t}} =
\begin{cases}
R_{h',{\Delta t'}}
& \text{if } j+1 = 0 \mod K, \\
I_H & \text{otherwise,} 
\end{cases}
\hspace{3 pt}
\hat{F}_{h',\Delta t}^{1,j}=
\begin{cases}
F_{h',\Delta t'}^{1,j/K}
& \text{if } j+1 = 1 \mod K, \\
I_H & \text{otherwise,} 
\end{cases}
\end{equation*}
\begin{equation*}
\hat{F}_{h',\Delta t}^{2,j} =
\begin{cases}
F_{h',\Delta t'}^{2,j/K}
& \text{if } j+1 = 1 \mod K, \\
0 & \text{otherwise,} 
\end{cases}
\text{ and } \hat{G}(t_j) = G(t_{j-(j\hspace{-5 pt} \mod K)}).
\end{equation*}
Note that $\hat{X}^{t_{j+1}}_{h', \Delta t} = X^{t'_{(j+1)/K}}_{h', \Delta t'}$ when $j+1 = 0 \mod K$ since then
\begin{align*}
\hat{R}^j_{h',{\Delta t}}	\hat{X}^{t_{j+1}}_{h', \Delta t} &= 
\hat{F}_{h',\Delta t}^{1,j} \hat{X}^{t_{j}}_{h', \Delta t} + \hat{F}^{2,j}_{h',{\Delta t}} + P_{h'} \hat{G}(t_j) \Delta W^{j}
= \hat{X}^{t_{j}}_{h', \Delta t} + P_{h'} G(t_{j-(K-1)}) \Delta W^{j} \\
&= \hat{X}^{t_{j-1}}_{h', \Delta t} + P_{h'} G(t_{j-1-(K-2)}) \Delta W^{j-1} + P_{h'} G(t_{j-(K-1)}) \Delta W^{j} = \cdots \\
&= \hat{F}_{h',\Delta t}^{1,j+1-K} \hat{X}_{h',\Delta t}^{t_{j+1-K}} + \hat{F}^{2,j+1-K}_{h',{\Delta t}} + P_h G(t_{j+1-K}) \sum_{i=1}^{K} \Delta W^{j-(i-1)} \\
&= F_{h',\Delta t'}^{1,(j+1)/K-1} \hat{X}_{h',\Delta t}^{t_{j+1-K}} + F^{2,(j+1)/K-1}_{h',{\Delta t'}} + P_h G(t'_{(j+1)/K-1}) \Delta W^{'(j+1)/K-1},
\end{align*}
where $\Delta W^{'(j+1)/K-1} = W(t'_{(j+1)/K})-W(t'_{(j+1)/K-1})$. Hence, sampling the pair of discretizations $\big((X_{h,\Delta t}^{t_{j}})_{j=0}^{N_{\Delta t}}, (X_{h',\Delta t'}^{t'_{j}})_{j=0}^{N_{\Delta t'}}\big)$ on the same realization of the driving $Q$-Wiener process is equivalent to solving the system
\begin{equation*}
\begin{bmatrix}
\hat{R}^j_{h',{\Delta t}} & 0 \\
0 & R^j_{h,{\Delta t}}
\end{bmatrix}
\begin{bmatrix}
\hat{X}_{h', \Delta t}^{t_{j+1}} \\
X_{h, \Delta t}^{t_{j+1}} 
\end{bmatrix}
= 
\begin{bmatrix}
\hat{F}^{1,j}_{h',{\Delta t}} & 0 \\
0 & F^{1,j}_{h,{\Delta t}}
\end{bmatrix}
\begin{bmatrix}
\hat{X}_{h', \Delta t}^{t_{j}} \\
X_{h, \Delta t}^{t_{j}} 
\end{bmatrix}
+
\begin{bmatrix}
\hat{F}^{2,j}_{h',{\Delta t}} \\
F^{2,j}_{h,{\Delta t}}
\end{bmatrix}
+
\begin{bmatrix}
P_{h'} \hat{G}(t_j) \\
P_{h} G(t_j)
\end{bmatrix}
\Delta W^j	
\end{equation*}
in $V_{h'} \oplus V_h$ for $j = 0, 1, \ldots, N_{\Delta t} -1$. We note that $[\hat{X}_{h', \Delta t}^{t_{j}}, X_{h, \Delta t}^{t_{j}}]'$ is a Gaussian $V_{h'} \oplus V_h$-valued random variable for all $j = 0, 1, \ldots, N_{\Delta t} -1$. Therefore, a covariance-based approach for sampling $\big(X_{h,\Delta t}^{T}, X_{h',\Delta t'}^{T}\big)$ could be obtained by directly computing $\Cov\big([ X_{h, \Delta t}^{T}, {X}_{h', \Delta t'}^{T}]'\big) = \Cov\big([ X_{h, \Delta t}^{T}, \hat{X}_{h', \Delta t}^{T}]'\big)$. However, to save computational work, we base  Algorithm~\ref{alg:cov_mlmc} on computing $\Cov\big(X_{h', \Delta t'}^{T}, X_{h, \Delta t}^{T}\big)$ instead. The following theorem gives the scheme for this, which is derived analogously to Theorem~\ref{thm:mc_cov}.
\begin{theorem}
	\label{thm:mlmc_cov}
	Let the terms of \eqref{eq:spde} satisfy Assumption~\ref{assumptions:1} and let, for $h, h', \Delta t, \Delta t' > 0$, $(X_{h,\Delta t}^{t_{j}})_{j=0}^{N_{\Delta t}}$ and  $(X_{h',\Delta t'}^{t'_{j}})_{j=0}^{N_{\Delta t'}}$ be given by the drift-implicit Euler--Maruyama scheme \eqref{eq:backward_euler}. Assume further that $\Delta t' = K \Delta t$ for some $K \in \N$ with $K > 1$. Then the cross-covariance $\Cov\big(X_{h', \Delta t'}^{T}, {X}_{h, \Delta t}^{T}\big)$ is given by $\Sigma_T \in V_{h'} \otimes V_{h}$, where the sequence $(\Sigma_{t_j})_{j=0}^{N_{\Delta t}}$ fulfills 
	\begin{equation}
	\label{eq:cross_cov_recursion}
	\big(\hat{R}_{h',{\Delta t}'}^j \otimes R_{h,{\Delta t}}\big) \Sigma_{t_{j+1}} =  \big(\hat{F}^{1,j}_{h,{\Delta t}}  \otimes F^{1,j}_{h,{\Delta t}} \big) \Sigma_{t_{j}} + \E\left[P_{h'} \hat{G}(t_j) \Delta W^j \otimes P_h G(t_j)\Delta W^j\right].
	\end{equation}
\end{theorem}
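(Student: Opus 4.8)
The plan is to mimic the proof of Theorem~\ref{thm:mc_cov} almost verbatim, replacing the tensor square $\otimes 2$ by the asymmetric tensor product of the extended coarse system with the fine one. First I would recall that the coupled pair on the common fine time grid satisfies, for $j = 0, 1, \ldots, N_{\Delta t}-1$,
\begin{equation*}
\hat{R}^j_{h',{\Delta t}} \hat{X}_{h',\Delta t}^{t_{j+1}} =  \hat{F}_{h',\Delta t}^{1,j} \hat{X}_{h',\Delta t}^{t_j} + \hat{F}^{2,j}_{h',{\Delta t}} + P_{h'} \hat{G}(t_j) \Delta W^{j}
\end{equation*}
and the analogous (unhatted) recursion for $X_{h,\Delta t}^{t_{j+1}}$, both driven by the \emph{same} increment $\Delta W^j$, and both with $\hat{X}^{t_0}_{h',\Delta t} = X^{t'_0}_{h',\Delta t'}$, $X^{t_0}_{h,\Delta t}$ deterministic. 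As in Theorem~\ref{thm:mc_cov}, I would first treat the case $\hat{F}^{2,j}_{h',\Delta t} = 0$ and $F^{2,j}_{h,\Delta t} = 0$; the mean recursions for $\hat\mu^{t_j} = \E[\hat X^{t_j}_{h',\Delta t}]$ and $\mu^{t_j} = \E[X^{t_j}_{h,\Delta t}]$ follow by applying $\E[\cdot]$ and using $\E[\Delta W^j] = 0$.

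Next I would form the tensor product of the two recursions in $V_{h'} \otimes V_h$: applying $\hat R^j_{h',\Delta t} \otimes R^j_{h,\Delta t}$ to $\hat X^{t_{j+1}}_{h',\Delta t} \otimes X^{t_{j+1}}_{h,\Delta t}$ expands into four terms, namely $(\hat F^{1,j}_{h',\Delta t} \otimes F^{1,j}_{h,\Delta t})(\hat X^{t_j}_{h',\Delta t} \otimes X^{t_j}_{h,\Delta t})$, the two mixed terms $\hat F^{1,j}_{h',\Delta t}\hat X^{t_j}_{h',\Delta t} \otimes P_h G(t_j)\Delta W^j$ and $P_{h'}\hat G(t_j)\Delta W^j \otimes F^{1,j}_{h,\Delta t} X^{t_j}_{h,\Delta t}$, and the noise term $P_{h'}\hat G(t_j)\Delta W^j \otimes P_h G(t_j)\Delta W^j$. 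The key point is that $\Delta W^j = W(t_{j+1}) - W(t_j)$ is independent of $\cF_{t_j}$, hence independent of both $\hat X^{t_j}_{h',\Delta t}$ and $X^{t_j}_{h,\Delta t}$ (these depend only on increments up to time $t_j$), so the expectations of the two mixed terms factor and vanish because $\E[\Delta W^j] = 0$. Taking expectations therefore gives
\begin{equation*}
\big(\hat R^j_{h',\Delta t} \otimes R^j_{h,\Delta t}\big)\E\big[\hat X^{t_{j+1}}_{h',\Delta t} \otimes X^{t_{j+1}}_{h,\Delta t}\big] = \big(\hat F^{1,j}_{h',\Delta t} \otimes F^{1,j}_{h,\Delta t}\big)\E\big[\hat X^{t_j}_{h',\Delta t} \otimes X^{t_j}_{h,\Delta t}\big] + \E\big[P_{h'}\hat G(t_j)\Delta W^j \otimes P_h G(t_j)\Delta W^j\big].
\end{equation*}
Tensorizing the pair of mean recursions yields $(\hat R^j_{h',\Delta t} \otimes R^j_{h,\Delta t})(\hat\mu^{t_{j+1}} \otimes \mu^{t_{j+1}}) = (\hat F^{1,j}_{h',\Delta t} \otimes F^{1,j}_{h,\Delta t})(\hat\mu^{t_j} \otimes \mu^{t_j})$, and subtracting this from the previous display gives exactly \eqref{eq:cross_cov_recursion} with $\Sigma_{t_j} = \E[\hat X^{t_j}_{h',\Delta t} \otimes X^{t_j}_{h,\Delta t}] - \hat\mu^{t_j} \otimes \mu^{t_j} = \Cov(\hat X^{t_j}_{h',\Delta t}, X^{t_j}_{h,\Delta t})$. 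Finally I would note that $\hat X^{t_{j}}_{h',\Delta t}$ agrees with $X^{t'_{j/K}}_{h',\Delta t'}$ whenever $K \mid j$ (this is the computation already displayed before the theorem statement), so at $j = N_{\Delta t}$, i.e. time $T$, we get $\Sigma_T = \Cov(\hat X^T_{h',\Delta t}, X^T_{h,\Delta t}) = \Cov(X^T_{h',\Delta t'}, X^T_{h,\Delta t})$, as claimed. The general affine case with nonzero $\hat F^{2,j}$, $F^{2,j}$ goes through identically: these deterministic shift terms appear in both the tensorized noisy recursion and the tensorized mean recursion and cancel on subtraction.

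The only place requiring a little care — the main obstacle, such as it is — is the independence bookkeeping for the mixed terms on the extended grid: one must check that $\hat X^{t_j}_{h',\Delta t}$, although built from a nonstandard recursion that only updates at certain sub-steps, is still $\cF_{t_j}$-measurable and hence independent of $\Delta W^j$. This follows because the hatted recursion uses only the increments $\Delta W^0, \ldots, \Delta W^{j-1}$, which are all $\cF_{t_j}$-measurable. Beyond that, the argument is a routine transcription of Theorem~\ref{thm:mc_cov} with $\otimes 2$ replaced by the asymmetric product, using that tensor products of bounded linear operators commute with Bochner expectation on $V_{h'} \otimes V_h$ (a finite-dimensional space, so there are no convergence subtleties).
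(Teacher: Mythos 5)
Your proposal is correct and is exactly the argument the paper intends: the paper gives no separate proof of Theorem~\ref{thm:mlmc_cov}, stating only that it is ``derived analogously to Theorem~\ref{thm:mc_cov}'', and your transcription of that proof with the symmetric tensor square replaced by $\hat X^{t_j}_{h',\Delta t}\otimes X^{t_j}_{h,\Delta t}$, the mixed terms killed by independence of $\Delta W^j$ from $\cF_{t_j}$, and the tensorized means subtracted is precisely that derivation, including the identification $\hat X^T_{h',\Delta t}=X^T_{h',\Delta t'}$ at the final time. The only slip is the parenthetical claim that $X^{t_0}_{h,\Delta t}$ is deterministic — Assumption~\ref{assumptions:1} allows a Gaussian $x_0$ — but this does not affect the argument, which only uses $\cF_0$-measurability of the initial value and would simply start the recursion at $\Sigma_{t_0}=\Cov(P_{h'}x_0,P_h x_0)$.
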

\begin{algorithm}[h!]
	\begin{algorithmic}[1]
		\STATE $result = 0$
		\FOR{$\ell=0$ to $L$}
		\STATE Compute the covariance matrix $\mathbf{\Sigma}$ and mean vector $\boldsymbol{\mu}$ of 
		\begin{equation*}
		\bar{\mathbf{x}}_{\ell} =  \left[[\bar{\mathbf{x}}_{h_{\ell-1}}^{T}]^{'},[\bar{\mathbf{x}}_{h_\ell}^{T}]^{'}\right]^{'} = \left[ x^{\ell-1}_1, x^{\ell-1}_2, \ldots, x^{\ell-1}_{N_{h_{\ell-1}}}, x^\ell_1, x^\ell_2, \ldots, x^\ell_{N_{h_{\ell}}} \right]'
		\end{equation*} by computing the means, covariances and cross-covariances of the pair $(X^T_{\ell-1},X^T_\ell)$ via the solution of the matrix equations corresponding to \eqref{eq:mean_recursion}, \eqref{eq:cov_recursion} and \eqref{eq:cross_cov_recursion} 
		\FOR{$i=1$ to $N_\ell$}
		\STATE Sample $\bar{\mathbf{x}}_{\ell} \sim N(\mathbf{\mu},\mathbf{\Sigma})$ 
		\STATE Compute $\phi(X^T_\ell) - \phi(X^T_{\ell-1}) =  \phi\left(\sum^{N_{h_{\ell}}}_{k=1} x^{\ell}_k \Phi^{h_{\ell}}_k\right) - \phi\left(\sum^{N_{h_{\ell-1}}}_{k=1} x^{\ell-1}_k \Phi^{h_{\ell-1}}_k\right)$
		\STATE $result = result + \left(\phi(X^T_\ell) - \phi(X^T_{\ell-1})\right)/N_\ell$
		\ENDFOR
		\ENDFOR
		\STATE $E^L\left[\phi(X^T_L)\right] = result$
	\end{algorithmic}
	\caption{Covariance-based MLMC method of computing an estimate $E^L \left[\phi\left(X^T_L\right)\right]$ of $\E[\phi(X(T))]$}
	\label{alg:cov_mlmc}
\end{algorithm}
The following proposition, which is an adaptation of \cite[Theorem 1]{L16} to our setting, shows how one should choose the sample sizes in an MLMC algorithm and provides bounds on the overall computational work for Algorithms~\ref{alg:path_mlmc} and~\ref{alg:cov_mlmc}.
\begin{proposition}
	\label{prop:multilevel_costs}
	Let $\phi$ and the terms of \eqref{eq:spde} satisfy Assumptions~\ref{assumptions:1} and \ref{assumptions:2}. Let $(h_\ell)_{\ell \in \N_0}$ be a sequence of maximal mesh sizes that satisfy $h_\ell \simeq a^{-\ell}$ for some $a > 1$	and all $\ell \in \N_0$. Let $(X^T_\ell)_{\ell \in \N_0}$ be a sequence of approximations of $X(T)$, where $X^T_\ell = X^T_{h_\ell, \Delta t_\ell}$ is given by the recursion $\eqref{eq:backward_euler}$ with $\Delta t_\ell^\delta \simeq h_\ell^2$.
	
	For $L \in \N, \ell = 1, \ldots, L$, $\varepsilon > 0$, set $N_\ell = \ceil{h_L^{-4} h_\ell^{2} \ell^{1+\varepsilon}}$, where $\ceil{\cdot}$ is the ceiling function, and $N_0 = \ceil{h_L^{-4}}$. Then there exists a constant $C > 0$ such that, for all $L \in  \N$,
	\begin{equation*}
	\norm[L^2(\Omega; \R)]{\E \left[\phi\left(X(T)\right)\right] - E^L \left[\phi\left(X^T_L\right)\right]} \le  C (1 + |\log(h_L)|) h_L^2.
	\end{equation*}
	Under Assumption~\ref{assumptions:3}, the cost of finding $E^L \left[\phi\left(X^T_L\right)\right]$ with Algorithm~\ref{alg:path_mlmc} is bounded by $\Op(\max(h_L^{-2-\alpha d-2/\delta} L^{2+\varepsilon},h_L^{-\omega_3 d}))$. With Algorithm~\ref{alg:cov_mlmc}, the bound of the cost is instead given by $\Op(\max(h_L^{-2d-2/\delta}L, h_L^{-2-2d} L^{2+\varepsilon},h_L^{-\omega_4 d} ))$.
\end{proposition}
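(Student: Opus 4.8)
The plan is to split the total error into a deterministic discretization bias and a statistical error, bound each, and then carry out the complexity accounting level by level via Assumption~\ref{assumptions:3}; the error part follows the lines of \cite[Theorem~1]{L16}. Write $\E[\phi(X(T))] - E^L[\phi(X^T_L)] = (\E[\phi(X(T))] - \E[\phi(X^T_L)]) + (\E[\phi(X^T_L)] - E^L[\phi(X^T_L)])$. The first, deterministic term is bounded by $C(1 + |\log(h_L)|)(h_L^2 + \Delta t_L^\delta) \le C(1 + |\log(h_L)|) h_L^2$ by Theorem~\ref{thm:weakconvergence} together with $\Delta t_L^\delta \simeq h_L^2$. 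Since the telescoping MLMC estimator is unbiased for $\E[\phi(X^T_L)]$ and the per-level estimators are formed from independent samples, $\norm[L^2(\Omega;\R)]{\E[\phi(X^T_L)] - E^L[\phi(X^T_L)]}^2 = N_0^{-1}\Var(\phi(X^T_0)) + \sum_{\ell=1}^L N_\ell^{-1}\Var(\phi(X^T_\ell) - \phi(X^T_{\ell-1}))$. Here $\Var(\phi(X^T_0)) \le C$ since $\phi$ has polynomially growing derivatives and $X^T_0$ has bounded moments of every order (Theorem~\ref{thm:strongconvergence}); and for $\ell \ge 1$ the mean value theorem in $H$ gives $|\phi(u) - \phi(v)| \le C(1 + \norm{u}^q + \norm{v}^q)\norm{u-v}$, so by H\"older's inequality and the uniform moment bounds $\E[(\phi(X^T_\ell) - \phi(X^T_{\ell-1}))^2] \le C\norm[L^4(\Omega;H)]{X^T_\ell - X^T_{\ell-1}}^2 \le C h_\ell^2$, the last step using the triangle inequality, Theorem~\ref{thm:strongconvergence}, $h_{\ell-1} \simeq a h_\ell$, and $\Delta t_\ell^{1/2} \simeq h_\ell^{1/\delta} \le C h_\ell$ (as $\delta \le 1$). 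Inserting the prescribed $N_\ell$ yields $N_0^{-1}\Var(\phi(X^T_0)) \le C h_L^4$ and $N_\ell^{-1}\Var(\phi(X^T_\ell) - \phi(X^T_{\ell-1})) \le C h_L^4 \ell^{-(1+\varepsilon)}$ for $\ell \ge 1$, so, as $\sum_{\ell\ge 1}\ell^{-(1+\varepsilon)} < \infty$, the statistical error is at most $C h_L^2$; the triangle inequality gives the asserted bound.

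For the cost of Algorithm~\ref{alg:path_mlmc}, note $\Delta t_\ell \simeq h_\ell^{2/\delta}$, so level $\ell$ has $N_{\Delta t_\ell} \simeq h_\ell^{-2/\delta}$ time steps and, by Assumption~\ref{assumptions:3} and $h_{\ell-1} \simeq a h_\ell$, one coupled sample $(X^T_{\ell-1}, X^T_\ell)$ costs $\Op(N_{\Delta t_{\ell-1}} h_{\ell-1}^{-\alpha d} + N_{\Delta t_\ell} h_\ell^{-\alpha d}) = \Op(h_\ell^{-\alpha d - 2/\delta})$. Thus the online cost is $\sum_{\ell=0}^L \Op(N_\ell h_\ell^{-\alpha d - 2/\delta})$; inserting $N_\ell$ bounds each summand by $C h_L^{-4}\ell^{1+\varepsilon} h_\ell^{2 - \alpha d - 2/\delta}$, and since $2 - \alpha d - 2/\delta < 0$ ($\alpha d \ge 1$, $2/\delta \ge 2$) this geometric-type sum is, up to a factor polynomial in $L$, dominated by its $\ell = L$ term, giving $\Op(h_L^{-2 - \alpha d - 2/\delta} L^{2+\varepsilon})$. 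Adding the offline cost $\Op(h_L^{-\omega_3 d})$ gives the stated bound.

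For Algorithm~\ref{alg:cov_mlmc}, at level $\ell$ one first solves the recursions \eqref{eq:mean_recursion}, \eqref{eq:cov_recursion}, \eqref{eq:cross_cov_recursion}: the two mean recursions are of lower order, \eqref{eq:cov_recursion} costs $\Op(N_{\Delta t_\ell} h_\ell^{-2d}) = \Op(h_\ell^{-2d - 2/\delta})$, and \eqref{eq:cross_cov_recursion}, run over the fine time grid on $V_{h_{\ell-1}} \otimes V_{h_\ell}$ whose dimension is $\simeq h_{\ell-1}^{-d} h_\ell^{-d} \simeq h_\ell^{-2d}$ by $h_{\ell-1}\simeq a h_\ell$, costs likewise $\Op(h_\ell^{-2d - 2/\delta})$. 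One then draws $N_\ell$ samples from the resulting $(N_{h_{\ell-1}} + N_{h_\ell})$-dimensional Gaussian at cost $\Op(h_\ell^{-2d})$ each, for a sampling cost $\Op(N_\ell h_\ell^{-2d})$ at level $\ell$. Summing the recursion costs over the $L+1$ levels gives at most $\Op(L\, h_L^{-2d - 2/\delta})$, and $\sum_{\ell=0}^L \Op(N_\ell h_\ell^{-2d}) = \Op(h_L^{-4}) + \sum_{\ell=1}^L \Op(h_L^{-4}\ell^{1+\varepsilon} h_\ell^{2 - 2d})$; the exponent $2 - 2d$ is nonnegative only in the worst case $d = 1$, where the sum is $\Op(h_L^{-4} L^{2+\varepsilon}) = \Op(h_L^{-2 - 2d} L^{2+\varepsilon})$ (for $d \ge 2$ the sum is geometric with a smaller power of $L$). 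Adding $\Op(h_L^{-\omega_4 d})$ gives the stated bound. I expect the main obstacle to be bookkeeping rather than conceptual: tracking how $\Delta t_\ell \simeq h_\ell^{2/\delta}$ and $h_{\ell-1} \simeq a h_\ell$ enter at every level, checking that the cross-covariance recursion on the tensor-product space has the same per-step order as \eqref{eq:cov_recursion}, and isolating the borderline case $d = 1$ responsible for the factor $L^{2+\varepsilon}$.
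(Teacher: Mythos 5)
Your proposal is correct and follows essentially the same route as the paper: the bias--statistical-error split (the paper cites Lemma~2 of \cite{L16}, which is the same decomposition with second moments in place of variances), the level-variance bound via the mean value theorem for $\phi \in C^2_{\mathrm{p}}(H;\R)$ combined with the strong convergence of Theorem~\ref{thm:strongconvergence} and $\Delta t_\ell^\delta \simeq h_\ell^2$, and the same level-by-level cost accounting under Assumption~\ref{assumptions:3} for both algorithms. The only deviations are cosmetic (bounding $X^T_\ell - X^T_{\ell-1}$ directly with H\"older in $L^4$ rather than splitting through $\phi(X(T))$, and isolating $d=1$ in the sampling-cost sum), and they lead to the same bounds.
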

\begin{proof}
	By \cite[Lemma 2]{L16},
	\begin{equation}
	\label{eq:mlmc_proof_1}
	\begin{split}
	&\norm[L^2(\Omega; \R)]{\E \left[\phi\left(X(T)\right)\right] - E^L \left[\phi\left(X^T_L\right)\right]} \\ 
	&\quad\le \left|\E\left[\phi\left(X(T)\right) - \phi\left(X^T_L\right)\right]\right| \\ 
	&\quad\quad + \left(N_0^{-1} \norm[L^2(\Omega;\R)]{\phi\left(X^T_0\right)}^2 + \sum_{\ell=1}^{L} N_\ell^{-1} \norm[L^2(\Omega;\R)]{\phi\left(X^T_\ell\right) - \phi\left(X^T_{\ell-1}\right)}^2 \right)^{1/2}.
	\end{split}
	\end{equation}
	By the fact that the Fr\'echet derivative $\phi'$ is at most polynomially growing and by the uniform bound on $X^T_{\ell}$ from Theorem~\ref{thm:strongconvergence}, there is a constant $C>0$ such that $\norm[L^2(\Omega;\R)]{\phi\left(X^T_0\right)}^2 < C$. Moreover, the mean-value theorem for Fr\'echet differentiable mappings (cf. \cite[Example 4.2]{P17}) shows that there exist $p \ge 2$ and $C > 0$ such that 
	\begin{equation*}
	\norm[L^2(\Omega;\R)]{\phi\left(X^T_{\ell}\right) - \phi\left(X(T)\right)}^2 \le C \norm[L^p(\Omega; H)]{X^T_{\ell} - X(T)}^2 \text{ for all } \ell \in \N_0,
	\end{equation*}
	so that, using Theorem~\ref{thm:strongconvergence}, we get a constant $C > 0$ such that
	\begin{align*}
	\norm[L^2(\Omega;\R)]{\phi\left(X^T_\ell\right) - \phi\left(X^T_{\ell-1}\right)}^2 &\le C \left( \norm[L^p(\Omega; H)]{X^T_{\ell} - X(T)}^2 + \norm[L^p(\Omega; H)]{X^T_{\ell-1} - X(T)}^2 \right) \\
	&\le C (h_\ell^2 + h_{\ell-1}^2) \le C (1 + a^{2}) h_\ell^2.
	\end{align*}
	Hence, using Theorem~\ref{thm:weakconvergence} in \eqref{eq:mlmc_proof_1} yields the existence of a constant $C>0$ such that
	\begin{align*}
	&\norm[L^2(\Omega; \R)]{\E \left[\phi\left(X(T)\right)\right] - E^L \left[\phi\left(X^T_L\right)\right]} \\
	&\quad\le C \left((1 + |\log(h_L)|) h_L^2  + \left(N_0^{-1} + \sum_{\ell=1}^{L} N_\ell^{-1} h_\ell^2 \right)^{1/2}\right) \\
	&\quad\le C h_L^2 \left( 1 + |\log(h_L)| + \left(1 + \zeta(1+\varepsilon) \right)^{1/2}\right),
	\end{align*}
	where $\zeta$ denotes the Riemann zeta function and the last inequality follows from the choice of sample sizes. This shows the first part of the theorem. 
	
	If we use Algorithm~\ref{alg:path_mlmc} to compute $E^L[\phi\left(X^T_L\right)]$, the cost of sampling $X^T_0$ $N_0$ times is by Proposition~\ref{prop:singlelevel_costs} $\Op(N_0 h_0^{-\alpha d-2/\delta}) = \Op(h_L^{-4})$. Similarly, since the computation of $\phi\left(X^T_\ell\right) - \phi\left(X^T_{\ell-1}\right)$, $1 \le \ell \le L$, is dominated by the sampling of $X^T_\ell$, the cost for the rest of the terms is bounded by a constant times
	\begin{equation*}
	\sum_{\ell=1}^{L} N_\ell h_\ell^{-\alpha d-2/\delta} = \sum_{\ell=1}^{L} h_L^{-4} h_\ell^{2-\alpha d-2/\delta} \ell^{1+\varepsilon} \le h_L^{-2-\alpha d-2/\delta} L^{2+\varepsilon}. 
	\end{equation*}
	For Algorithm~\ref{alg:cov_mlmc}, the cost of sampling $X^T_0$ $N_0$ times is still $\Op(h_L^{-4})$. For $1 \le \ell \le L$, the cost of computing the covariance of $X^T_\ell$, $X^T_{\ell-1}$ and their cross-covariance is dominated by the cost of computing the covariance of $X^T_\ell$, which is, by the same reasoning as that preceding Proposition~\ref{prop:singlelevel_costs}, $\Op(h_\ell^{-2d-2/\delta})$. The cost of sampling a positively correlated pair of $X^T_\ell$ and  $X^T_{\ell-1}$, given that all covariances have been computed, is $\Op(h_\ell^{-2d}) + \Op(h_{\ell-1}^{-2d}) = \Op(h_\ell^{-2d})$ so the total cost for sampling $\phi\left(X^T_\ell\right) - \phi\left(X^T_{\ell-1}\right)$ for all $\ell = 1, 2, \ldots, L$ is bounded by a constant times 
	\begin{align*}
	\sum_{\ell=1}^{L} \left(h_\ell^{-2d-2/\delta} + N_\ell h_\ell^{-2d} \right) &= \sum_{\ell=1}^{L} \left(h_\ell^{-2d-2/\delta} + h_L^{-4} h_\ell^{2-2d} \ell^{1+\varepsilon} \right) \\ &\le 2 \max(h_L^{-2d-2/\delta}L, h_L^{-2-2d} L^{2+\varepsilon} ).
	\end{align*}
	This finishes the proof.
\end{proof}
\begin{remark}
	We note that this is a suboptimal choice of sample sizes compared to the standard $N_\ell \simeq \sqrt{V_\ell/C_\ell}$, see \cite{G15}, where $V_\ell$ and $C_\ell$ are the variance and computational cost, respectively, of $\phi(X_\ell^T)-\phi(X_{\ell-1}^T)$. The reason for our choice is to avoid the estimation of the additional error resulting from the estimation of $V_\ell$, cf. \cite{BL12a,BLS13}.  
\end{remark}
\begin{remark}
	\label{rem:weak_convergence}
	Note that Assumption~\ref{assumptions:2} is only used to tune the MC and MLMC estimators, using the weak convergence result Theorem~\ref{thm:weakconvergence}. Assuming only Assumption~\ref{assumptions:1}, Theorem~\ref{thm:strongconvergence} can be used for the tuning if $\phi$ is Lipschitz. However, the result can be suboptimal, cf.~\cite{L16}. Moreover, there exist results on weak convergence with different assumptions on the parameters of \eqref{eq:spde} (including rougher noise), for example~\cite{AKL16}. If these are used for tuning, the conclusions regarding which methods are best may be different. The same is true when the parameters of~\eqref{eq:spde} are such that the drift-implicit Euler scheme coincides with a Milstein scheme, see~\cite{BL12}.
\end{remark}

\section{Implementation}
\label{sec:implementation}

In this section we describe the implementation of the algorithms of Sections~\ref{sec:mc} and~\ref{sec:mlmc} in the setting of Example~\ref{ex:stoch-conv}, and motivate Assumption~\ref{assumptions:3} along the way. For simplicity, we conform to Assumption~\ref{assumptions:2} by setting $b=c=0$. For the spatial discretization, let $V_h$ be the space of piecewise linear polynomials on a mesh of $D \subset \R^d, d = 1,2,3$ with maximal mesh size $h>0$, with a basis $\mathbf{\Phi}^h = (\Phi^h_i)_{i=1}^{N_h}$.

Recall that the system of equations in $\R^{N_h}$ corresponding to \eqref{eq:backward_euler} that we must solve in Algorithms~\ref{alg:path_mc} and~\ref{alg:path_mlmc} is given by 
\begin{equation}
\label{eq:singlelevel_matrix_system}
\left(\mathbf{M}_h + \Delta t \mathbf{A}_h \right) \bar{\mathbf{x}}_h^{t_j+1} = \mathbf{M}_h  \bar{\mathbf{x}}_h^{t_j} + \mathbf{f}^{t_j}_h + g(t_j) \mathbf{\Delta} \mathbf{W}_{h}^j, \quad j = 0, 1, \ldots, N_{\Delta t} - 1.
\end{equation}
This gives the vector $\bar{\mathbf{x}}_h^{T}$. Here $\mathbf{M}_h$ is the mass matrix, $\mathbf{A}_h$ the stiffness matrix, $\mathbf{f}^{t_j}_h$ a vector corresponding to the function $d(t_j,\cdot)$ and $(\mathbf{\Delta} \mathbf{W}_{h}^j)^{N_{\Delta t} - 1}_{j=0}$ a family of iid Gaussian $\R^{N_h}$-valued random vectors with covariance matrix $\mathbf{\Sigma}_{h, \Delta W}$, the entries of which are given by $s_{i,j} = \Delta t \int_{D^2} q(x,y) \Phi^h_i(x)  \Phi^h_j(y) \dd x \, \dd y$, $i,j \in \{1, 2, \ldots, N_h\}$. Solving the discretized elliptic problem~\eqref{eq:singlelevel_matrix_system} can be accomplished by direct or iterative solvers. Like the authors of \cite{ABS13,BL12a,BLS13,BSZ11} we assume that we have access to a solver such that the inversion of $\mathbf{M}_h + \Delta t \mathbf{A}_h$ costs $\Op(h^{-d})$ and any additional offline costs are $\Op(h^{-\omega_1 d})$. Whether this is true will depend on the parameters, the mesh and the dimension of the problem. For $d=1$ it is immediately true, for $d>1$ we mention multigrid methods, see, e.g.,  \cite{BS08}. Given this, the cost of solving~\eqref{eq:singlelevel_matrix_system} is dominated by the generation of the stochastic term $\mathbf{\Delta} \mathbf{W}_{h}^j$. In special cases, for example, if $q$ is piecewise analytic (see \cite{KLL10}) or if $Q$ is specified via a truncated Karhunen--Lo\`eve expansion where the truncation does not depend on $h$ (see also Section~\ref{sec:numerics}), the complexity is linear, i.e., $\alpha=1$ in Assumption~\ref{assumptions:3}. If no special assumptions are made on $q$, the standard option is to use a Cholesky or eigenvalue decomposition of the covariance matrix for $P_h \Delta W$ (cf. \cite[Chapter 7]{LPS14}). Since the cost of the decomposition is cubic and the matrix multiplication cost of generating $P_h \Delta W$ is quadratic, we then have $\omega_1, \omega_3 \ge 3$ and $\alpha=2$ in Assumption~\ref{assumptions:3}.

For Algorithms~\ref{alg:cov_mc} and~\ref{alg:cov_mlmc} we must solve tensorized systems. By expanding \eqref{eq:cov_recursion} and applying $\inpro{\Phi^{2,h}_i}{\cdot}$ to each side for $i = 1, 2, \ldots, N_h^2$, where $\mathbf{\Phi}^{2,h} = (\Phi^{2,h}_i)^{N_h^2}_{i=1}$ is a basis of $V_h^{\otimes 2}$, a system of equations in $\R^{2 N_h}$ for the covariance recursion scheme of Theorem~\ref{thm:mc_cov} is obtained. Choosing $\Phi^{2,h}_i = \Phi^h_{\floor{(i-1)/N_h}+1}\otimes \Phi^h_{i-\floor{(i-1)/N_h}N_h}$ for $i=1, 2, \ldots, N_h^2$, the matrices corresponding to $(R_{h,\Delta t}^{})^{\otimes 2}$ and $(F_{h,\Delta t}^{1,j})^{\otimes 2}$ will be Kronecker products of the matrices corresponding to $R_{h,\Delta t}^{}$ and $F_{h,\Delta t}^{1,j}$, $j = 0, 1, \ldots, N_{\Delta t}-1$. In this setting, the resulting system at time $t_{j+1}$ is 
\begin{equation*}
\left(\mathbf{M}_h + \Delta t \mathbf{A}_h \right)^{\otimes_K 2} \bar{\mathbf{y}}_h^{t_j+1} = \mathbf{M}_h^{\otimes_K 2}  \bar{\mathbf{y}}_h^{t_j} + g(t_j)^2 \vect\left({\mathbf{\Sigma}_{h, \Delta W}}\right),
\end{equation*}
where $\otimes_K$ denotes the Kronecker product and $\vect$ the vectorization operator. Here $\bar{\mathbf{y}}_h^{t_j}=\vect(\mathbf{\Sigma}_{\bar{\mathbf{x}}_h^{t_{j}}})$, the covariance matrix of $\bar{\mathbf{x}}_h^{t_{j}}$. By the identity $\vect{(ABC)}=(C'\otimes_K A)\vect(B)$, where $A,B$ and $C$ are matrices such that $ABC$ is well-defined, this is equivalent to the matrix system 
\begin{equation}
\label{eq:singlelevel_cov_matrix_system}
\left(\mathbf{M}_h + \Delta t \mathbf{A}_h \right) 
\mathbf{\Sigma}_{\bar{\mathbf{x}}_h^{t_{j+1}}}
\left(\mathbf{M}_h + \Delta t \mathbf{A}_h \right)
=
\mathbf{M}_h 
\mathbf{\Sigma}_{\bar{\mathbf{x}}_h^{t_{j}}}
\mathbf{M}_h
+ g(t_j)^2 \mathbf{\Sigma}_{h, \Delta W},
\end{equation}
where we have used symmetry of the matrices involved.
Assuming that the solver of~\eqref{eq:singlelevel_matrix_system} is used, the cost of this system is $\Op(h^{-2d})$. To see this, note that since $\mathbf{M}_h$ has $\Op(h^{-d})$ nonzero entries, the right hand side can be formed in $\Op(h^{-2d})$. One then solves for the symmetric matrix $\mathbf{U} = \mathbf{\Sigma}_{\bar{\mathbf{x}}_h^{t_{j+1}}}
\left(\mathbf{M}_h + \Delta t \mathbf{A}_h \right)$ by employing the solver for each of its $\Op(h^{-d})$ columns, and then one similarly solves for the symmetric matrix $\mathbf{\Sigma}_{\bar{\mathbf{x}}_h^{t_{j+1}}}$ in $\left(\mathbf{M}_h + \Delta t \mathbf{A}_h \right)\mathbf{\Sigma}_{\bar{\mathbf{x}}_h^{t_{j+1}}} = \mathbf{U}$. 
Having computed $\mathbf{\Sigma}_{\bar{\mathbf{x}}_h^{T}}$, sampling using this costs $\Op(h^{-2d})$, corresponding to a matrix-vector multiplication, assuming that a Cholesky or eigenvalue decomposition has been used, yielding $\omega_2, \omega_4 \ge 3$. 

In Algorithm~\ref{alg:cov_mlmc}, we also need to solve for the cross-covariance. Given the approximations $(X_{h,\Delta t}^{t_{j}})_{j=0}^{N_{\Delta t}}$ and  $(X_{h',\Delta t'}^{t'_{j}})_{j=0}^{N_{\Delta t'}}$ of Theorem~\ref{thm:mlmc_cov}, choosing a basis $\mathbf{\Phi}^{2,h',h} = (\Phi^{2,h',h}_i)_{i=1}^{N_{h'} N_{h}}$ of $ V_{h'} \otimes V_{h} $ by $\Phi^{2,h',h}_i = \Phi^{h'}_{\floor{(i-1)/N_{h}}+1}\otimes \Phi^{h}_{i-\floor{(i-1)/N_{h}}N_{h}}$ yields, in the same way as above, a matrix system 
\begin{align*}
&\left(\mathbf{M}_{h} + \Delta t \mathbf{A}_{h} \right) 
\mathbf{\Sigma}_{\bar{\mathbf{x}}_{h}^{{t_{j+1}}},\bar{\mathbf{x}}_{h'}^{t_{j+1}}}
\left(\mathbf{M}_{h'} + \Delta t' \mathbf{A}_{h'} \right) \\
&\qquad=
\mathbf{M}_{h} 
\mathbf{\Sigma}_{\bar{\mathbf{x}}_{h}^{{t_{j}}},\bar{\mathbf{x}}_{h'}^{t_{j}}}
\mathbf{M}_{h'}
+ g(t_j)g(t_{j-(j\hspace{-5 pt}\mod K)}) \mathbf{\Sigma}_{{h,h'}, \Delta W},
\end{align*}
corresponding to \eqref{eq:cross_cov_recursion} at time $t_{j+1}$ with $j+1 = 0 \mod K$. Here $\mathbf{\Sigma}_{\bar{\mathbf{x}}_{h}^{{t_{j}}},\bar{\mathbf{x}}_{h'}^{t_{j}}}$ is the cross-covariance matrix of $\bar{\mathbf{x}}_{h}^{t_{j}}$ and $\bar{\mathbf{x}}_{h'}^{t_j}$ while $\mathbf{\Sigma}_{{h,h'}, \Delta W}$ is a matrix with entries given by $s_{i,j} = {\Delta t} \int_{D^2} q(x,y) \Phi^{h}_i(x)  \Phi^{h'}_j(y) \dd x \, \dd y$, $i \in \{1, \ldots, N_{h}\}$, $j \in \{1, \ldots, N_{h'}\}$. By the same reasoning as above, the cost of solving this system is $\Op(\min(h',h)^{-2d})$. Having solved for the cross-covariance matrix, the vector $\bar{\mathbf{x}}^T_{h,h'} =  \left[\big[\bar{\mathbf{x}}_{h'}^{T}\big]',\big[\bar{\mathbf{x}}_{h}^{T}\big]' \right]'$ is sampled at a cost of $\Op(\min(h',h)^{-2d})$ using the matrix
\begin{equation*}
\mathbf{\Sigma}_{\bar{\mathbf{x}}^T_{h,h'}} = \left[
\begin{array}{cc}
\mathbf{\Sigma}_{\bar{\mathbf{x}}_{h'}^{T}} & \mathbf{\Sigma}_{\bar{\mathbf{x}}_{h'}^{T},\bar{\mathbf{x}}_{h}^{T}} \\
\mathbf{\Sigma}_{\bar{\mathbf{x}}_{h'}^{T},\bar{\mathbf{x}}_{h}^{T}}' & \mathbf{\Sigma}_{\bar{\mathbf{x}}_{h}^{T}}
\end{array}
\right],
\end{equation*}
where the diagonal entries are obtained via \eqref{eq:singlelevel_cov_matrix_system} as before.

Having motivated Assumption~\ref{assumptions:3}, we compare the costs of the algorithms. Looking at Propositions~\ref{prop:singlelevel_costs} and~\ref{prop:multilevel_costs}, we see that if $\alpha=2$, Algorithm~\ref{alg:cov_mc} should be used for all $d \in \{1,2,3\}$ in the case that $\delta = 1/2$, while if $\delta = 1$, Algorithm~\ref{alg:cov_mlmc} should be preferred for all $d \in \{1,2,3\}$. If $\alpha=1$, for $d=1,2$ we are in the same case as before, that is to say, Algorithm~\ref{alg:cov_mc} is preferred for $\delta = 1/2$ and Algorithm~\ref{alg:cov_mlmc} for $\delta = 1$. For $d=3$, however, Algorithm~\ref{alg:path_mlmc} outperforms both covariance-based algorithms. Note that we only consider the work needed to solve the problem and not the memory. If the stochastic terms can be generated in linear complexity (i.e. $\alpha=1$), we may not need to store a covariance matrix of size $\Op(h^{-2d})$, which implies that Algorithms~\ref{alg:path_mc} and~\ref{alg:path_mlmc} are preferred when lack of memory is a concern. 

\section{Simulation}
\label{sec:numerics}

In this section we illustrate our result numerically, employing the discretization of the previous section to the stochastic heat equation driven by additive noise,
\begin{equation*}
\dd X(t) = \Delta X(t) \dd t + \dd W(t),
\end{equation*}
on $H = L^2(D)$ with $D = (0,1)$, for $t \in (0,T] = (0,1]$ with initial value $X(0) = x_0 = x\chi_{(0,1/2)}(x) + (1-x)\chi_{(1/2,1)}(x)$, where $\chi$ denotes the indicator function, and Dirichlet zero boundary conditions. We choose a simple $Q$-Wiener process
\begin{equation*}
W(t,x) = \frac{5\beta_1(t)}{x^{0.45}} + \frac{5\beta_2(t)}{(1-x)^{0.45}},
\end{equation*}
$t \in [0,T]$ and $x \in D$, where $\beta_1, \beta_2$ are two independent standard real-valued Wiener processes on $[0,T]$. This can be generated in linear complexity (i.e., $\alpha=1$ in Assumption~\ref{assumptions:3}). This means that the covariance function corresponding to $Q$ is, for $x,y \in D$, given by
\begin{equation*}
q(x,y) =  \frac{25}{(xy)^{0.45}} + \frac{25}{\left((1-x)(1-y)\right)^{0.45}}.
\end{equation*} A uniform spatial mesh is used in our finite element discretization. 

We now compute approximations of $\E[\phi(X(T))]$, with $\phi(\cdot) = \norm{\cdot}^2 \in C^2_{\mathrm{p}}(H;\R)$. Figure~\ref{fig:mc_conv} shows estimates $\left(\sum_{i=1}^{5} \left(\E[\phi(X(T))] - E_N[\phi(X_{h,\Delta t}^{T})]^{(i)} \right)^2\right)^{1/2},$
for 5 different realizations of $E_N[\phi(X_{h,\Delta t}^{T})]$, of the mean squared errors $\norm[L^2(\Omega;\R)]{\E[\phi(X(T))] - E_N[\phi(X_{h,\Delta t}^{T})]}$
for $h = 2^{-1}, 2^{-2}, \ldots, 2^{-5}$, computed with Algorithms~\ref{alg:path_mc} and~\ref{alg:cov_mc}, choosing $\Delta t$ and $N$ according to Proposition~\ref{prop:singlelevel_costs}. In the case of the covariance-based method, we also include $h= 2^{-6}$ and $h = 2^{-7}$. The quantity $\E[\phi(X(T))]$ is replaced by a reference solution $\E[\phi(X_{h,\Delta t}^{T})]$, with $h = 2^{-8}$, computed with a deterministic method, cf. \cite[Section 6]{P17}. An eigenvalue decomposition was used for the covariance-based method. As expected, the order of convergence is $\Op(h^2)$. In Figure~\ref{fig:mc_costs} we show the computational costs in seconds of the realizations of $E_N[\phi(X_{h,\Delta t}^{T})]$ along with the upper bounds on the costs from Proposition~\ref{prop:singlelevel_costs}. The costs appear to asymptotically follow these bounds. 
\begin{figure}[ht]
	\centering
	\subfigure[Mean square errors for Algorithm~\ref{alg:path_mc} (Path+MC) and Algorithm~\ref{alg:cov_mc} (Cov+MC). \label{fig:mc_conv}]{\includegraphics[width = .49\textwidth]{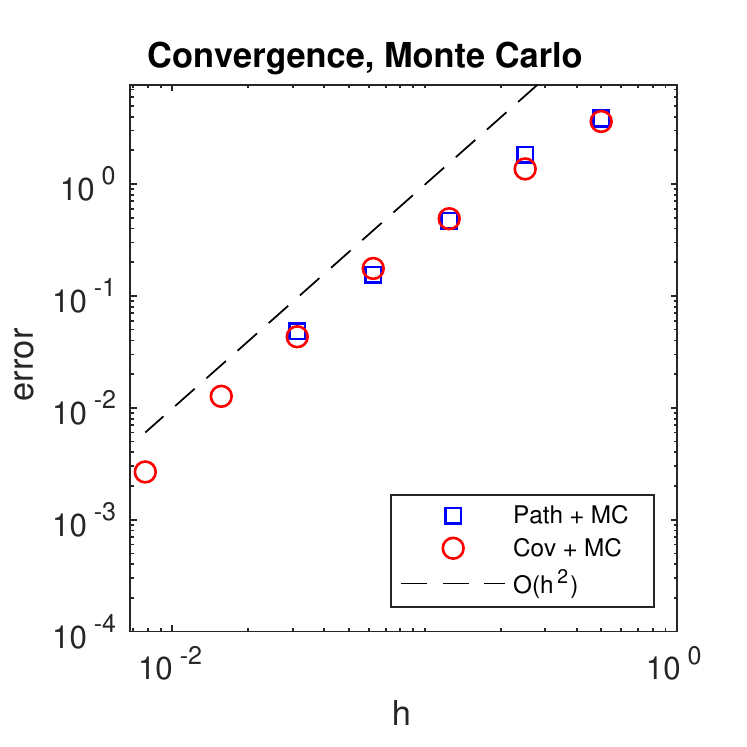}}
	\hspace*{0em}
	\subfigure[Computational costs in seconds for the simulations of Figure~\ref{fig:mc_conv} with bounds from Proposition~\ref{prop:singlelevel_costs}. \label{fig:mc_costs}]{\includegraphics[width = .49\textwidth]{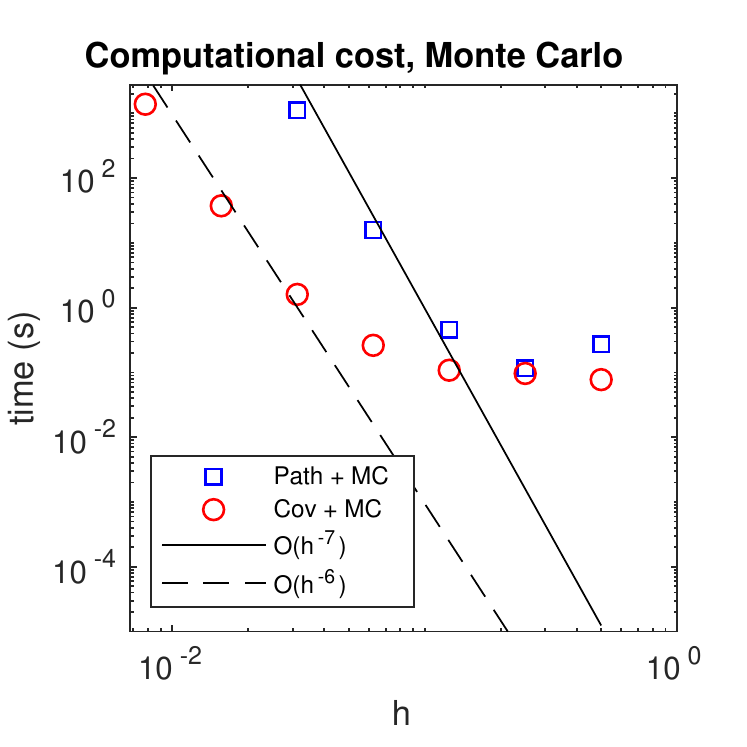}}
	\caption{Convergence and computational costs of Algorithms~\ref{alg:path_mc} and~\ref{alg:cov_mc}.}
\end{figure}

For the MLMC estimator $E^L[\phi(X_L^T)]$ we set, for $\ell = 0, \ldots, L$, $h_\ell = 2^{-\ell-1}$ and choose the temporal step sizes and sample 5 sizes according to Proposition~\ref{prop:multilevel_costs}. In Figure~\ref{fig:mlmc_conv} we show estimates
$\left(\sum_{i=1}^{5} \left(\E[\phi(X(T))] - E^L[\phi(X_L^{T})]^{(i)} \right)^2\right)^{1/2},$
for 5 different realizations of $E^L[\phi(X_L^{T})]$, of the mean squared errors 
$\norm[L^2(\Omega;\R)]{\E[\phi(X(T))] - E^L[\phi(X_L^{T})]}$
for $L = 0, 1, 2, \ldots, 5$ and for Algorithm~\ref{alg:cov_mlmc} also for $L=6$, using the reference solution from before. The order of convergence is again as expected. In Figure~\ref{fig:mlmc_costs} we show the computational costs with the upper bounds on the costs from Proposition~\ref{prop:multilevel_costs}. Both methods appear to  follow the derived complexity bounds.

\begin{figure}[ht]
	\centering
	\subfigure[Mean square errors for Algorithm~\ref{alg:path_mlmc} (Path+MLMC) and Algorithm~\ref{alg:cov_mlmc} (Cov+MLMC). \label{fig:mlmc_conv}]{\includegraphics[width = .49\textwidth]{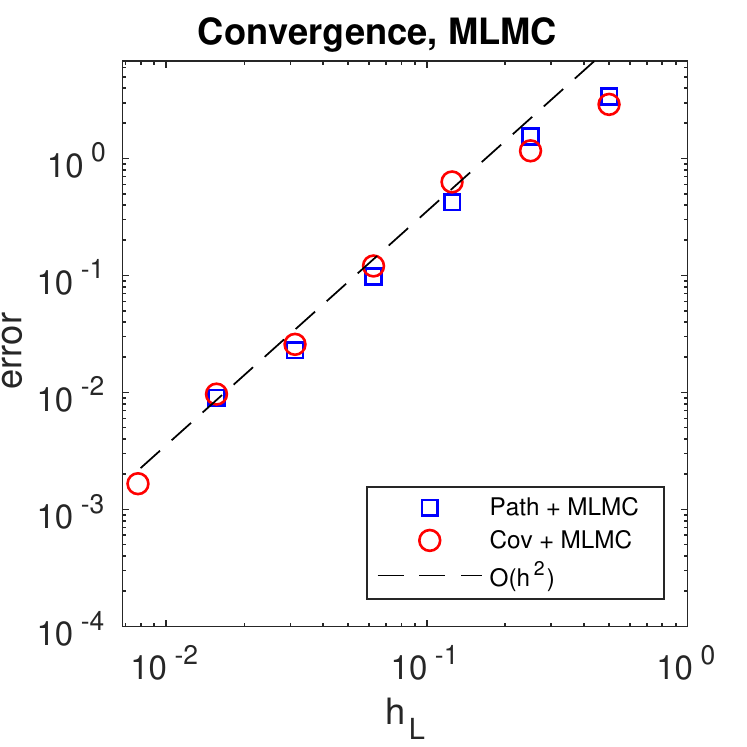}}
	\hspace*{0em}
	\subfigure[Computational costs in seconds for the simulations of Figure~\ref{fig:mlmc_conv} with bounds from Proposition~\ref{prop:multilevel_costs}. \label{fig:mlmc_costs}]{\includegraphics[width = .49\textwidth]{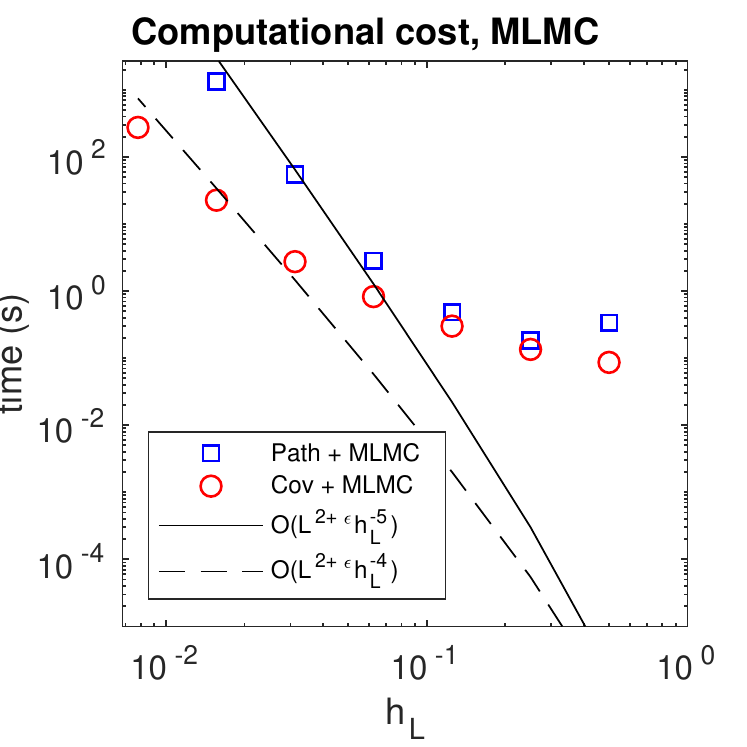}}
	\caption{Convergence and computational costs of the MLMC estimator (Algorithms~\ref{alg:path_mlmc} and~\ref{alg:cov_mlmc}).}
\end{figure}

Finally, for completeness, we show in Figure~\ref{fig:strong_error} a Monte Carlo approximation $ E_N[\norm{X(T) - X_{h,\Delta t}^{T}}^2]^{1/2}$ of the strong error $\norm[L^2(\Omega;H)]{X(T) - X_{h,\Delta t}^{T}}$ for $h = 2^{-1}, 2^{-2}, \ldots, 2^{-6}$, $\Delta t = h^2$, with a reference solution at $h = 2^{-8}$ used in place of $X(T)$. $N = 100$ samples were used. The order of convergence is as expected from Theorem~\ref{thm:strongconvergence}.

The computations in this section were performed in MATLAB\textsuperscript{\textregistered} R2017b on a laptop with a dual-core Intel\textsuperscript{\textregistered} Core\textsuperscript{TM} i7-5600U 2.60GHz CPU. 

\begin{figure}[ht]
	\centering
	{\includegraphics[width = .49\textwidth]{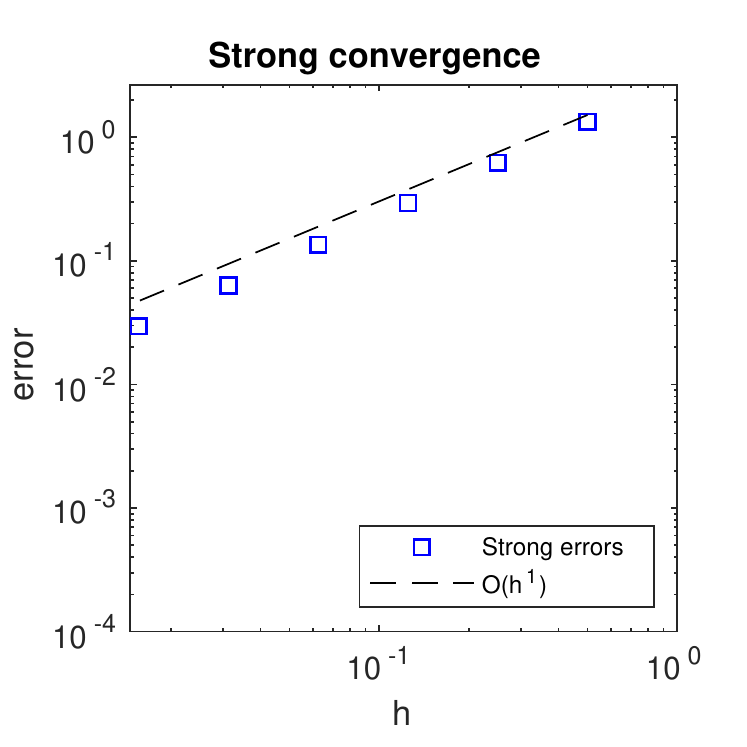}}
	\caption{A Monte Carlo estimate of the strong error $\norm[L^2(\Omega;H)]{X(T) - X_{h,\Delta t}^{T}}$.} \label{fig:strong_error}
\end{figure}
\bibliographystyle{hplain}
\bibliography{cov-sampling}

\end{document}